\DeclareMathOperator{\Hilb}{\mathcal H}
\newcommand{\tensor}{\otimes}
\newtheoremstyle{break}
  {\topsep}{\topsep}%
  {\bfseries}{}%
  {\newline}{}%
\theoremstyle{break}
\newtheorem{theorem}{Theorem}[section]
\newtheorem{theorem-break}[theorem]{Theorem}
\newtheorem{lemma}[theorem]{Lemma}
\newtheorem{corollary}[theorem]{Corollary}
\newtheorem{proposition}[theorem]{Proposition}
\theoremstyle{definition}
\newtheorem{definition}[theorem]{Definition}
\newtheorem{example}[theorem]{Example}
\title{The Quasicentral Modulus Associated with a Class of Nonself-similar Fractals}
\author{R. Alexander Glickfield}
\begin{document}

\begin{titlingpage}
    \maketitle
    \begin{abstract}
We discuss an extension to Voiculescu's formula for the quasicentral modulus of a tuple of commuting, self-adjoint operators with spectral measure absolutely continuous with respect to a generalized Hausdorff measure. These Hausdorff measures are  defined by gauge functions which are not power functions and are supported on nonself-similar fractals.
    \end{abstract}
\end{titlingpage}

\section{Introduction}

\hspace{12pt} The possibility of diagonalizing a self-adjoint operator by a small compact perturbation was first observed by Weyl \cite{von1909beschrankte}. He showed that any self-adjoint operator is the sum of a diagonal operator (in some orthonormal basis) with a compact self-adjoint operator. Von Neumann later improved this result by considering Hilbert-Schmidt perturbations \cite{von1935charakterisierung}. The result was further extended by Kuroda who showed that it is possible to diagonalize a self-adjoint operator by a perturbation contained in any normed ideal other than the trace class \cite{kuroda1958theorem}. Later, Berg \cite{berg1971extension} extended Weyl's compact perturbation result to normal operators. It was not known whether compact operators can be replaced by Hilbert-Schmidt operators in Berg's result until Voiculescu's work \cite{voiculescu1979some}, a corollary of which being an affirmative answer to this question. Voiculescu discovered that the simultaneous diagonalization modulo a normed ideal of an $n$-tuple of commuting self-adjoint operators is equivalent to the existence of a quasicentral (as measured by the norm in the ideal) approximate unit of finite rank operators. A natural question is to find the obstruction ideals, that is, the normed ideals $\mathfrak{S}_\Phi$ for which a given $n$-tuple of commuting self-adjoint operators \textit{cannot} be expressed as the sum of a diagonal $n$-tuple with an $n$-tuple of operators in $\mathfrak{S}_{\Phi}$.

The first example of an obstruction ideal was found by Kato \cite{kato1957perturbation} and Rosenblum \cite{rosenblum1957perturbation} who proved that the absolutely continuous part of the spectral measure of a self-adjoint operator is invariant under trace-class perturbations. (When a self-adjoint operator has only singular spectrum, Carey and Pincus \cite{carey1974perturbation} proved that the trace class is \textit{not} an obstruction deal). Voiculescu found in \cite{voiculescu1979some} a formula for the quasicentral modulus of $n$-tuples of operators relative to the Lorentz-type ideals $\mathfrak{S}_n^-$. This modulus is zero precisely when diagonalization modulo the ideal is possible. The formula has also been generalized to the Lorentz ideals $\mathfrak{S}_p^-$ in the case of certain $n$-tuples of operators with self-similar spectra of noninteger Hausdorff dimension $p$ (see \cite{voiculescu2021formula} and also \cite{ikeda2023quasicentral} for more recent work).

The goal of this paper is two-fold. First, given an $n$-tuple of operators whose spectral measure is absolutely continuous with respect to a generalized Hausdorff measure generated by some gauge function, we find the largest obstruction ideal. Second, we extend the integral formula for the quasicentral modulus to these $n$-tuples. To this end, we discuss necessary preliminaries in Section 2. In Section 3, we construct the maximal obstruction ideals for the relevant multiplication operators. In Section 4, we prove an ampliation homogeneity result which is central to the extension of Voiculescu's formula. In Section 5, we state and prove the formula for the quasicentral modulus in these cases.

\section{Notation and Preliminaries}

\subsection{Operator Preliminaries}

We denote by $B(\Hilb)$ the algebra of bounded linear operators on a separable, infinite dimensional, complex Hilbert space $\Hilb$. We write $c_{00}$ for the vector space of real sequences $\{\xi_j\}_{j=1}^{\infty}$ that have only finitely many nonzero terms, and let $\mathcal {SN}$ denote the set of norms $\Phi: c_{00}\rightarrow [0,\infty)$ that satisfy $\Phi(\xi)=\Phi(\xi^*)$, where $\xi^*$ is the nonincreasing rearrangement of $\{|\xi_j|\}_{j=1}^\infty$. These functions $\Phi$ are known as \textit{symmetric norming functions}. One may normalize an element $\Phi\in \mathcal{SN}$ by replacing it with $\Phi/\Phi(1,0,0,\dots)$.

Given $\Phi\in \mathcal{SN}$ and a finite rank operator $F$ with singular values $(s_j)_{j=1}^\infty$, we define $|F|_{\Phi}:=\Phi((s_j)_{j=1}^\infty)$. For arbitrary $T\in B(\Hilb)$, we set $|T|_{\Phi} = \sup_{P\in\mathcal{P}}|PT|$, where $\mathcal P$ denotes the directed set of finite rank projections on $\Hilb$. The set $\mathfrak S_\Phi = \{T\in B(\Hilb) : |T|_\Phi <+\infty\}$ is an ideal and $(\mathfrak{S}_\Phi,|\cdot|_\Phi)$ is a Banach space. For all the norms we consider, the ideal of finite rank operators is dense in the Banach space $\mathfrak{S}_{\Phi}$.

We denote by $\mathcal {R}_1^+$ the directed set of positive, finite rank contractions, where $A\geq B$ if $A-B$ is a positive operator. For $A,T\in B(\Hilb)$, we recall (see \cite{voiculescu1979some}) that the quasicentral modulus $k_\Phi(T)$ of an operator $T$ is defined as
\[k_\Phi (T) := \liminf_{A\in \mathcal{R}_1^+} |[A,T]|_\Phi,\]
\noindent where $[A,T] = AT-TA$. Consider now $n$-tuples $\tau^{(j)}=(T_1^{(j)},\dots,T_n^{(j)})$ with $T_k^{(j)}\in (B(\Hilb))^n$, where $j=0,1,2$, and let $A,B\in B(\Hilb)$. We define the following operations:
\begin{align*}
A\tau^{(0)}B&:=(AT_1^{(0)}B,\dots,AT_n^{(0)}B),\\
\tau^{(1)}+\tau^{(2)}&:=(T_1^{(1)}+T_1^{(2)},\dots,T_n^{(1)}+T_n^{(2)}).
\end{align*}
We also set $||\tau||:= \max_{k}||T_k||$, and $|\tau|_\Phi:= \max_{k}|T_k|_\Phi$. The definition of the quasicentral modulus extends to $n$-tuples of operators as follows:
\[k_\Phi (\tau) := \liminf_{A\in \mathcal{R}_1^+} |[A,\tau]|_\Phi.\]
 A commuting $n$-tuple of self-adjoint operators $\tau$ is said to be \textit{diagonalizable moduluo} $\mathfrak{S}_\Phi$ if there exists a diagonal $n$-tuple $\Delta\in (B(\Hilb))^n$ so that $\tau - \Delta \in (\mathfrak{S}_\Phi)^n$. By \cite[Corollary 2.6]{voiculescu1979some}, $k_\Phi(\tau)>0$ if and only if $\tau$ is \textit{not} diagonalizable modulo $\mathfrak{S}_\Phi$. We say that $\mathfrak{S}_\Phi$ is an \textit{obstruction ideal} for $\tau$ when $k_\Phi(\tau) > 0$, and $\mathfrak{S}_\Phi$ is a \textit{diagonalization ideal} for $\tau$ otherwise. As shown in \cite{bercovici1989analogue}, any obstruction ideal of $\tau$ is contained in another obstruction ideal of the form $\mathfrak{S}_{\Phi_\pi}$ where $\pi = (\pi_k)_{k=1}^\infty\subset \mathbb R$ is a non-increasing real sequence such that $\pi_k\rightarrow 0$, $\sum_{k=1}^\infty \pi_k = +\infty$, and
\[\Phi_\pi(\xi):=\sum_{k=1}^\infty\pi_k\xi_k^*.\]
We also write $|\cdot|_\pi$ and $k_\pi(\cdot)$ for $|\cdot|_{\Phi_\pi}$ and $k_{\Phi_\pi}(\cdot)$, respectively. A sequence $\pi$ as above is said to be \textit{regular} if there exists a constant $\alpha>0$ such that
$\sum_{k=1}^m \pi_k \leq \alpha m \pi_m$ for every $m\in \mathbb N$.

\subsection{Hausdorff Measures}

Fix an increasing function $f:[0,\infty)\rightarrow[0,\infty)$ such that $f((0,+\infty))\subset (0,+\infty)$ and $\lim_{t\rightarrow 0} f(t) = 0$. Given an arbitrary subset $A\subset\mathbb {R}^n$, its \textit{outer Hausdorff measure} using the \textit{gauge function} $f$ is defined as

\[H_{f}^*(A):=\lim_{r\rightarrow 0}\inf\bigg\{\sum_{j=1}^\infty f(r_j) : A\subset \bigcup_{j=1}^\infty B(x_j,r_j) \text{ and } r_j < r\bigg\}.\] 

\noindent The restriction of $H_{f}^*$ to the $\sigma$-algebra of Borel sets in $\mathbb R^n$ is a measure denoted $H_f$.
We often assume the following regularity conditions on the gauge functions.
\begin{definition}
We say that a gauge function $f:[0,\infty)\rightarrow[0,\infty)$ has property $(R)$ if $f$ is a convex $C^2$ function such that $f'(0) = 0$.
\end{definition}
Note that gauge functions with property $(R)$ are invertible with respect to composition.
\begin{definition}
Given $s\geq 1$, we say that a gauge function $f:[0,\infty)\rightarrow[0,\infty)$ has property $(R_s)$ if $f$ has property $(R)$ and, in addition, for every $a\in\mathbb R$, $\lim_{x\rightarrow 0} f(ax)/f(x) = a^s$.
\end{definition}
Functions satisfying the second condition of Definition 2.2 are said to be \textit{regularly varying} at 0 with index $s$. We make note of a useful fact about regularly varying functions $\cite{bingham1989regular}$.

\begin{lemma}
    
If $f$ is a gauge function with property $(R_s)$, then 

\[\lim_{x\rightarrow 0} \frac{f^{-1}(x)}{f^{-1}(ax)} = \frac{1}{a^{1/s}}\]

\noindent where $f^{-1}$ is the compositional inverse of f.
\end{lemma}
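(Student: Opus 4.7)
The plan is to use the monotonicity of $f$ to translate the desired limit into a statement about ratios $f(cy)/f(y)$, which can then be read off directly from the regular variation hypothesis in Definition 2.2.

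First, set $b := a^{1/s}$ and, for $x>0$ small, let $y := f^{-1}(x)$, so that $y \to 0^+$ as $x \to 0^+$ by continuity of $f^{-1}$ at $0$. It suffices to show that $f^{-1}(ax)/y \to b$, since taking reciprocals then yields the claim. Fix $\epsilon > 0$ with $b - \epsilon > 0$; since $t \mapsto t^s$ is strictly increasing on $(0,\infty)$, one can choose $\delta > 0$ so that $(b-\epsilon)^s + \delta < a < (b+\epsilon)^s - \delta$. Applying the hypothesis $\lim_{y \to 0} f(cy)/f(y) = c^s$ at the two constants $c = b \pm \epsilon$ then furnishes some $y_0 > 0$ such that for all $y \in (0, y_0)$,
\[\frac{f((b-\epsilon)y)}{f(y)} < a < \frac{f((b+\epsilon)y)}{f(y)}.\]
Multiplying through by $f(y) = x$ and applying $f^{-1}$ (which is strictly increasing, since $f$ is) yields $(b-\epsilon)y < f^{-1}(ax) < (b+\epsilon)y$, i.e.\ $\left| f^{-1}(ax)/y - b \right| < \epsilon$ for all sufficiently small $x$. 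Taking reciprocals gives the desired limit $1/a^{1/s}$.

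The main subtlety is simply ensuring that $f^{-1}$ is well-defined and strictly increasing on a neighborhood of $0$ so that the above inversion step is justified: property $(R)$ gives that $f$ is a $C^2$ convex function with $f'(0) = 0$, $f(0) = 0$, and $f((0,\infty)) \subset (0,\infty)$, and the paper remarks right after Definition 2.1 that such an $f$ is invertible. Beyond verifying this, the argument is a direct consequence of the regular variation hypothesis, so I do not anticipate a deeper obstacle.
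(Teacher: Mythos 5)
The paper does not prove this lemma; it simply cites Bingham--Goldie--Teugels \cite{bingham1989regular} for it (this is a special case of the standard result that the asymptotic inverse of a regularly varying function of index $s$ is regularly varying of index $1/s$). Your argument is a correct, self-contained elementary proof and is essentially the textbook sandwich argument for that theorem: by strict monotonicity of $f$, the inequality $f((b-\epsilon)y) < ax < f((b+\epsilon)y)$ (valid for small $y$ because the ratios $f((b\pm\epsilon)y)/f(y)$ converge to $(b\pm\epsilon)^s$, which straddle $a=b^s$) inverts to $(b-\epsilon)y < f^{-1}(ax) < (b+\epsilon)y$, from which the claim follows. Two small remarks: the auxiliary $\delta$ you introduce is never actually needed, since the strict inequalities $(b-\epsilon)^s < a < (b+\epsilon)^s$ together with the definition of limit already give you $y_0$; and for the hypothesis of Definition 2.2 to be applicable at $c=b-\epsilon$ you do need $b-\epsilon>0$, which you correctly require (the paper's phrasing ``for every $a\in\mathbb R$'' should of course be read as $a>0$). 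Verifying that $f$ is strictly increasing near $0$ follows, as you note, from convexity together with $f>0$ on $(0,\infty)$ and $f(0)=0$, and is exactly what makes the inversion step legitimate. Your proof is correct and fills a gap the paper leaves to the literature.
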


\subsection{Construction of Relevant Fractals}

Many of our results rely on the spectrum of a tuple of operators being contained in a fractal $K$ such that $0<H_f(K)<+\infty$. We also often require that the fractal satisfies the regularity condition $Cf(r) \leq H_f(B(x,r)\cap K) \leq Df(r)$ for every $x\in K$ and some constants $C,D>0$ independent of $x$. This subsection will provide the construction of such a fractal for the measure $H_f$.

We begin by discussing the construction of certain symmetric generalized Cantor sets. A more detailed discussion can be found in Section 2 of \cite{hatano2004hausdorff}. Given a gauge function $f:[0,\infty)\rightarrow[0,\infty)$ with property $(R_s)$, let $n=\lfloor s\rfloor+1$. We define the fractal inductively: Beginning with the unit interval, remove an open interval of length $\eta_{1} = (1-f^{-1}(1/2^n))/2$ so that the two remaining closed intervals $I_1^1, I_2^1$ are of equal length $\lambda_{1} = f^{-1}(1/2^n)$. For the $m$-th step, remove from each closed interval $I_j^{m-1}$ an open interval of length $\eta_m = (f^{-1}(1/2^{(m-1)n})-f^{-1}(1/2^{mn}))/{2}$ so that the resulting collection of $2^m$ intervals $I_j^{m}$ have equal length $\lambda_m = f^{-1}(1/2^{nm})$. We focus on the symmetric generalized Cantor set $C_{f}\subset \mathbb R^n$ which is the product of $n$ copies of \[\bigcap_{m=1}^{\infty}\bigcup_{j=1}^{2^m} I_{j}^{m}\]

    Observe that $C_f$ is a union of $2^n$ identical (up to translation) sets, each of which contains exactly one corner point of the unit $n$-cube. These sets are said to be in the first generation of $C_f$. Similarly, every set from the $L$th generation of $C_f$ is the union of $2^n$ sets from the $(L+1)$st generation. 
    
    Consider the lexicographic ordering of the corner points of the unit $n$-cube. For example, the order of the corner points of the unit square is:
    \[(0,0)\leq (0,1)\leq (1,0)\leq (1,1)\]
    We write $C_f^{k}$, $k =1,2,\dots 2^n$  for the set in the first generation that contains the $k$th corner point (relative to the lexicographic order). Likewise, observe that sets in the $L$th generation each contain exactly one outside corner point of a set in the $(L-1)$st generation. Thus we may index the sets in the $L$th generation with words of length $L$ whose "letters" are in $\{1,\dots,2^n\}$. We write $C_f^{w}$ with $w\in \prod_{1}^L\{1,\dots,2^n\}$ to identify the $L$th generation sets and denote the length $L$ of $w$ by $|w|$. See Figure 1 for a picture of the second generation for $C_f$ when $n=2$.
    
    Clearly, for any fixed length $L$ we have
    \[C_f = \bigcup_{|w|=L} C_f^w\]
        It was shown in \cite[Theorem 1]{hatano2004hausdorff} that $0<H_f(C_{f})<+\infty$ and, more generally $\xi\cdot2^{-n|w|}<H_f(C_{f}^w)<2^{-n|w|}$ for some $\xi>0$. It is easy to see that there exist constants $C,D > 0$ independent of $x$ so that $Cf(r)\leq H_f(C_f \cap B(x,r)) \leq Df(r)$ for every $r>0$.

\begin{center}
\includegraphics[scale=0.0525]{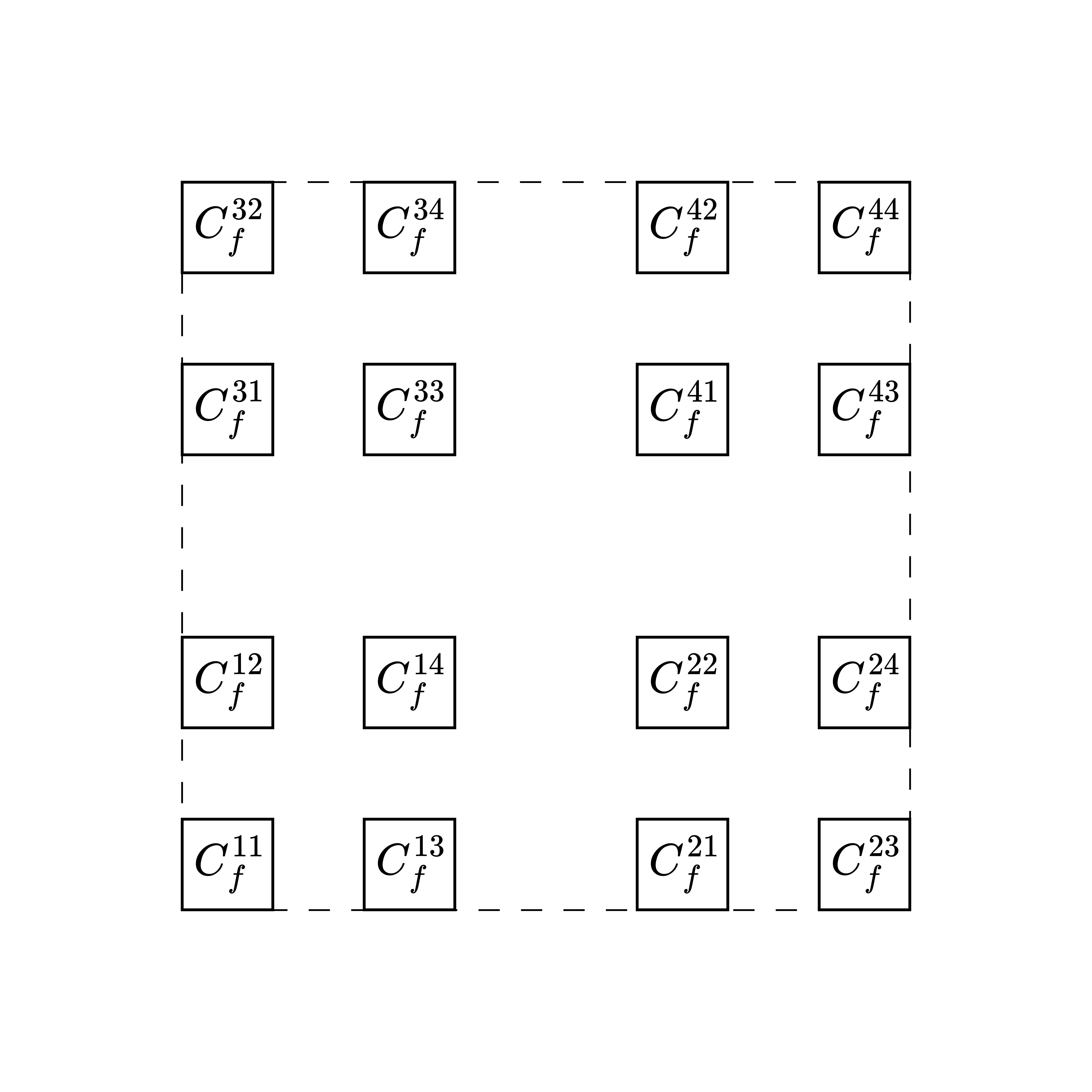}
\end{center}
\begin{center}
\vspace{-1.2cm}\textbf{Figure 1}
\end{center}

\subsection{Preliminary Lemmas}

We begin by recalling a convenient way to decompose a Hilbert space. 

\begin{lemma} Consider a gauge function $f$ with property $(R)$ and an $n$-tuple $\tau$ of commuting self-adjoint operators on $\Hilb$ satisfying $0<H_{f}(\sigma(\tau))<+\infty$. Let $E_{\tau}$ be the spectral measure of $\tau$, $\Hilb^f_{ac}$ be the space of vectors $v$ such that $\langle E_\tau(\cdot)v,v\rangle$ is absolutely continuous with respect to $H_{f}$, and $\Hilb^f_{s}$ be the space of vectors $v$ such that $\langle E_\tau(\cdot)v,v\rangle$ is singular with respect to $H_{f}$. The spaces $\Hilb_s^f$ and $\Hilb_{ac}^f$ are closed, reducing for $\tau$, and $\Hilb=\Hilb^f_{ac} \oplus \Hilb^f_{s}$. 
\end{lemma}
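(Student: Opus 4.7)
The plan is to view this as the Lebesgue decomposition of the spectral measures $\mu_v(B) := \langle E_\tau(B) v, v \rangle = \|E_\tau(B) v\|^2$, finite positive Borel measures on $\sigma(\tau)$, relative to the finite Borel measure $H_f|_{\sigma(\tau)}$. Under this correspondence, the defining conditions of $\Hilb^f_{ac}$ and $\Hilb^f_{s}$ become $\mu_v \ll H_f$ and $\mu_v \perp H_f$, respectively. Closedness of $\Hilb^f_{ac}$ is then immediate: the parallelogram-type bound $\mu_{\alpha u + \beta v}(N) \leq 2|\alpha|^2 \mu_u(N) + 2|\beta|^2 \mu_v(N)$ yields linearity, and if $v_n \to v$ with $E_\tau(N) v_n = 0$ for every $H_f$-null Borel set $N$, then $E_\tau(N) v = 0$ in the limit, so $\mu_v(N) = 0$.

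For $\Hilb^f_{s}$, linearity follows because the union of two $H_f$-null supports is again $H_f$-null. Closedness requires one small twist: if $v_n \to v$ and each $\mu_{v_n}$ is concentrated on an $H_f$-null Borel set $N_n$, then $N_\ast := \bigcup_n N_n$ is still $H_f$-null, the identities $v_n = E_\tau(N_\ast) v_n$ hold for every $n$, and passing to the limit yields $v = E_\tau(N_\ast) v \in \Hilb^f_{s}$. Orthogonality $\Hilb^f_{ac} \perp \Hilb^f_{s}$ is then obtained by writing $\langle u, v \rangle = \mu_{u,v}(\sigma(\tau))$ with $\mu_{u,v}(B) := \langle E_\tau(B) u, v \rangle$, splitting along an $H_f$-null support $N$ of $\mu_v$, and applying the Cauchy--Schwarz estimate $|\mu_{u,v}(B)|^2 \leq \mu_u(B)\,\mu_v(B)$: the summand over $N$ vanishes by absolute continuity of $\mu_u$, and the summand over $\sigma(\tau)\setminus N$ vanishes by singularity of $\mu_v$.

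To produce the direct-sum decomposition, I apply the classical Lebesgue decomposition $\mu_v = \mu_v^{ac} + \mu_v^{s}$ with $\mu_v^{s}$ concentrated on an $H_f$-null Borel set $N$, and then set $v_s := E_\tau(N) v$ and $v_{ac} := v - v_s = E_\tau(\sigma(\tau)\setminus N) v$. Using the identity $\mu_{E_\tau(B) v}(\cdot) = \mu_v(B \cap \cdot)$ together with $\mu_v^{ac}(N) = 0$, one checks $\mu_{v_s} = \mu_v^{s}$ and $\mu_{v_{ac}} = \mu_v^{ac}$, placing each summand in the appropriate subspace. Both subspaces reduce $\tau$ because each $T_k$ commutes with every spectral projection of $\tau$, so if $v = E_\tau(M) v$ then $T_k v = E_\tau(M) T_k v$, and symmetrically for $I - E_\tau(M)$. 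The only genuine subtlety is the closedness of $\Hilb^f_{s}$, where one must collapse a countable family of $H_f$-null supports into a single $H_f$-null set; separability of $\Hilb$ enters through the sequential nature of the limit.
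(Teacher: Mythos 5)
The paper states this lemma without proof, presenting it as a standard recollection (``We begin by recalling a convenient way to decompose a Hilbert space''), so there is no argument in the text to compare against. Your proof is correct and follows the expected route: reduce everything to the scalar spectral measures $\mu_v(B)=\|E_\tau(B)v\|^2$, apply the Lebesgue decomposition of $\mu_v$ relative to $H_f|_{\sigma(\tau)}$ (which is finite, hence $\sigma$-finite, by the hypothesis $H_f(\sigma(\tau))<+\infty$) to split each $v$ as $E_\tau(N)v+E_\tau(\sigma(\tau)\setminus N)v$ along an $H_f$-null carrier $N$ of the singular part, and verify closedness, orthogonality, and reduction by elementary spectral-projection estimates. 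The Cauchy--Schwarz bound $|\langle E_\tau(B)u,v\rangle|^2\le\mu_u(B)\mu_v(B)$ for the orthogonality step, the subadditivity bound for linearity of $\Hilb^f_{ac}$, and the countable-union-of-null-carriers trick for closedness of $\Hilb^f_{s}$ are all sound. One small inessential remark: the closing sentence invoking separability of $\Hilb$ is not needed here. Closedness of a subspace in any Hilbert space reduces to sequential closedness purely for metric-space reasons, and the step collapsing countably many $H_f$-null supports into one uses only that a convergent sequence is countable; separability plays no role in this lemma. You might also phrase the reducing step for $\Hilb^f_{ac}$ a bit more carefully, since that subspace is not characterized by a single supporting projection $E_\tau(M)$ but rather by $E_\tau(N)v=0$ for every $H_f$-null $N$; the commutation $E_\tau(N)T_kv=T_kE_\tau(N)v=0$ for all such $N$ is what you want, and self-adjointness of $T_k$ then upgrades invariance to reduction.
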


In Section 5 we show that the problem of finding an obstruction ideal for $\tau$ amounts to finding an obstruction ideal for multiplication by the variables on $L^2(\sigma(\tau), H_{f})$ whose definition we recall. Given a bounded set $\Omega\subset \mathbb R^n$ and a finite Borel measure $\mu$ on $\Omega$, we denote by $\tau_{\Omega,\mu}$ the multiplication by the variables on $L^2(\Omega, \mu)$, that is
\begin{align}
\tau_{\Omega,\mu} &:= (T_{1,\mu}, T_{2,\mu}, \dots, T_{n,\mu}), \text{ where}\\
(T_{j,\mu} f)(x) &:= x_jg(x), g\in L^2(\Omega, \mu).
\end{align}
The following results are standard and can be found in \cite{voiculescu1979some}.

\begin{lemma} Let $\tau\in (B(\Hilb))^n$ and let $(A_{j})_{j=1}^\infty\subset \mathcal{R}_1^+$ be a sequence that converges to the identity operator in the weak operator topology. Then 
\[k_\Phi(\tau)\leq \liminf_{j} |[A_j,\tau]|_\Phi .\]
\end{lemma}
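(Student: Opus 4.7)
The plan is to show that for every $A_0\in\mathcal R_1^+$ and every $\epsilon>0$ we can exhibit some $B\in\mathcal R_1^+$ with $B\geq A_0$ and $|[B,\tau]|_\Phi\leq \liminf_j|[A_j,\tau]|_\Phi + \epsilon$. By definition of the liminf over the directed set $\mathcal R_1^+$, this implies $k_\Phi(\tau)\leq \liminf_j|[A_j,\tau]|_\Phi$.

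First I would upgrade WOT convergence to SOT convergence. Since the $A_j$ are positive contractions, $A_j^2\leq A_j$, so
\[
\|A_jv-v\|^2 = \langle A_j^2 v,v\rangle - 2\langle A_jv,v\rangle + \|v\|^2 \leq 2\|v\|^2 - 2\langle A_jv,v\rangle \xrightarrow{j\to\infty} 0.
\]
Fix $A_0\in\mathcal R_1^+$ and write $X:=(I-A_0)^{1/2}$. Set $B_j := A_0 + XA_j X$. Then $0\leq A_0\leq B_j\leq A_0 + X^2 = I$, and $B_j$ is finite rank, so $B_j\in\mathcal R_1^+$ with $B_j\geq A_0$. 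Using $X^2 = I - A_0$, differentiating gives the commutator identity $X[X,\tau]+[X,\tau]X = -[A_0,\tau]$. Substituting this into a direct expansion of $[B_j,\tau]$ produces the cancellation
\[
[B_j,\tau] = X[A_j,\tau]X + X(A_j-I)[X,\tau] + [X,\tau](A_j-I)X.
\]

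The first term satisfies $|X[A_j,\tau]X|_\Phi\leq \|X\|^2\,|[A_j,\tau]|_\Phi \leq |[A_j,\tau]|_\Phi$. For the remaining two terms, the key observation is that $A_0$ has finite rank, so $X-I$ has finite rank, so each entry of $[X,\tau]$ has finite rank bounded by some $N$ independent of $j$. Since a symmetric norm applied to a rank-$N$ operator $F$ satisfies $|F|_\Phi\leq \Phi(\underbrace{1,\dots,1}_{N},0,\dots)\,\|F\|$, it suffices to show that $(A_j-I)[X,\tau]\to 0$ and $[X,\tau](A_j-I)\to 0$ in operator norm. The first follows by writing each $[X,T_k]=\sum_i|u_i\rangle\langle v_i|$ and using $\|(A_j-I)u_i\|\to 0$ from SOT convergence; the second follows by taking adjoints (the $A_j$ are self-adjoint).

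Putting the pieces together, $\limsup_j|[B_j,\tau]|_\Phi \leq \limsup_j|[A_j,\tau]|_\Phi$. Passing to a subsequence of $(A_j)$ along which $|[A_j,\tau]|_\Phi \to \liminf_j|[A_j,\tau]|_\Phi$, and taking $B=B_j$ for $j$ large, delivers the desired $B$. The main delicate point is ensuring the error terms in the expansion of $[B_j,\tau]$ actually vanish in the symmetric norm $|\cdot|_\Phi$; this is where the finite-rank of $A_0$ (which forces $[X,\tau]$ to be finite-rank with $j$-independent rank bound) is essential, since operator-norm convergence to zero does not in general imply $|\cdot|_\Phi$-convergence without such a uniform rank control.
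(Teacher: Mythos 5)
Your proof is correct, and since the paper does not supply its own argument for this lemma (it is cited as standard from Voiculescu's 1979 paper), your construction is essentially the classical one: given $A_0 \in \mathcal R_1^+$, set $X=(I-A_0)^{1/2}$ and $B_j = A_0 + XA_jX$, verify $B_j \geq A_0$ lies in $\mathcal R_1^+$, use the identity $X[X,\tau]+[X,\tau]X=-[A_0,\tau]$ to obtain the clean decomposition of $[B_j,\tau]$, and control the error terms via the uniform finite-rank bound on $[X,\tau]$ (forced by $X-I$ having finite rank) together with the upgrade from WOT to SOT convergence for positive contractions. The only point worth flagging as a detail, which you do correctly handle, is that the error terms must be killed in the $|\cdot|_\Phi$-norm rather than the operator norm, and it is precisely the $j$-independent rank bound that makes operator-norm convergence sufficient there.
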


\begin{lemma}  Let $\tau\in (B(\Hilb))^n$. Then there exists an increasing sequence $(A_j)_{j=1}^\infty\subset \mathcal{R}_1^+$ such that converges to the identity operator in the weak operator topology, and
\[k_\Phi(\tau)= \lim_{j\rightarrow\infty} |[A_j,\tau]|_\Phi .\]\\
Furthermore, the $A_j$ can be chosen so that $\lim ||[A_j,\tau]|| = 0$.

\end{lemma}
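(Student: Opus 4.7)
The plan is to combine three ingredients: the directed-set characterization of the liminf defining $k_\Phi(\tau)$, an inductive construction against a fixed orthonormal basis, and a convex-averaging refinement to secure operator-norm decay of commutators. The first two ingredients deliver the weak-convergence and $\Phi$-norm statements, while the third addresses the additional operator-norm claim.

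Unpacking the liminf over the directed set $\mathcal R_1^+$ yields
\[k_\Phi(\tau)\;=\;\sup_{B\in\mathcal R_1^+}\;\inf\{|[A,\tau]|_\Phi : A\in\mathcal R_1^+,\; A\geq B\},\]
so that for every $B\in\mathcal R_1^+$ and every $\epsilon>0$ there exists $A\in\mathcal R_1^+$ with $A\geq B$ and $|[A,\tau]|_\Phi<k_\Phi(\tau)+\epsilon$. I fix an orthonormal basis $(e_n)$ of $\Hilb$ and build $(B_j)$ inductively: having chosen $B_1\leq\cdots\leq B_j$, let $P_j$ denote the orthogonal projection onto $\mathrm{span}\{e_1,\dots,e_j\}+\mathrm{range}(B_j)$, which lies in $\mathcal R_1^+$. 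Select $B_{j+1}\in\mathcal R_1^+$ with $B_{j+1}\geq P_j$ and $|[B_{j+1},\tau]|_\Phi<k_\Phi(\tau)+1/j$. The relation $B_{j+1}\geq P_j$ together with $\|B_{j+1}\|\leq 1$ forces $B_{j+1}v=v$ for $v\in\mathrm{range}(P_j)$, since $I-B_{j+1}$ is a positive contraction whose associated quadratic form vanishes on such $v$. In particular $B_ke_i=e_i$ for all $k>i$, which together with monotonicity yields $B_j\to I$ in the strong (hence weak) operator topology. Lemma 2.5 then supplies the matching lower bound $k_\Phi(\tau)\leq\liminf|[B_j,\tau]|_\Phi$, so $|[B_j,\tau]|_\Phi\to k_\Phi(\tau)$.

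To arrange $\|[A_j,\tau]\|\to 0$ at the same time, I would strengthen the inductive step to also require $\|[B_{j+1},\tau]\|<1/j$. The justification is an Arveson-style quasicentrality argument: an approximate unit of the compact operators can be averaged, via Ces\`aro-type convex combinations, into one that is quasicentral in operator norm with respect to any prescribed bounded operator. Concretely, given $A'\in\mathcal R_1^+$ with $A'\geq P_j$ and $|[A',\tau]|_\Phi<k_\Phi(\tau)+1/j$, one replaces $A'$ by a convex average of $A'$ with other elements of $\mathcal R_1^+$ dominating $P_j$ to produce $A''\in\mathcal R_1^+$ with $A''\geq P_j$ and $\|[A'',\tau]\|<1/j$. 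The convexity of $|\cdot|_\Phi$ (a triangle-inequality consequence of $\Phi$ being a norm) ensures that $|[A'',\tau]|_\Phi$ is bounded by the maximum over the averaged elements, so the $\Phi$-norm bound $<k_\Phi(\tau)+1/j$ is preserved provided each averaged element satisfies it, which can be arranged by iterating the directed-set step.

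The main technical obstacle is precisely this joint control: the averaging procedure must simultaneously shrink $\|[\cdot,\tau]\|$, keep $|[\cdot,\tau]|_\Phi$ bounded by $k_\Phi(\tau)+1/j$, and preserve the domination $\geq P_j$ so the resulting sequence remains increasing. The balance between convexity (which averaging respects) and the order structure of $\mathcal R_1^+$ (which the relation $\geq$ preserves under convex combinations) is what makes the simultaneous construction work, following the classical technique of Voiculescu cited in the excerpt.
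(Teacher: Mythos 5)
The paper does not prove this lemma; it is stated among the ``standard'' preliminaries with a citation to Voiculescu's 1979 paper, so there is no proof in the source to compare against. Evaluating your proposal on its own, it is a sound reconstruction of the standard argument. The directed-set unwinding of the $\liminf$ is correct, and the inductive construction over an orthonormal basis gives the increasing sequence and strong (hence weak) convergence; the observation that $B_{j+1}\geq P_j$ and $\|B_{j+1}\|\leq 1$ force $B_{j+1}$ to act as the identity on $\mathrm{range}(P_j)$ is exactly the right mechanism, and the matching lower bound from Lemma 2.5 closes the loop for the $\Phi$-norm statement.

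The one place where the write-up is compressed enough to deserve a caveat is the Arveson-style averaging. You invoke ``Ces\`aro-type convex combinations'' to make $\|[A'',\tau]\|$ small while preserving $A''\geq P_j$ and the $\Phi$-bound, and you correctly note that both the order and the $\Phi$-bound are preserved by convex combinations. But the reason that \emph{some} convex combination of the $[u_n,\tau_i]$ can be made small in operator norm is not immediate from WOT-convergence alone: Mazur's theorem concerns the Banach-space weak topology, not the weak operator topology, and in general a bounded sequence converging to $0$ in WOT need not have $0$ in the norm-closed convex hull. What rescues the argument here is that each $[u_n,\tau_i]$ is \emph{finite rank} (since $u_n\in\mathcal R_1^+$), hence lives in $\mathcal K(\Hilb)$, where the weak topology $\sigma(\mathcal K,\mathcal C_1)$ coincides on bounded sets with WOT; Mazur then applies in $\mathcal K^n$ to give a convex combination with all $n$ commutators simultaneously small in norm. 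Adding that observation would make the averaging step airtight. With that noted, the structure of the proof is correct and matches the argument in the cited reference.
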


\begin{lemma} If $\{\tau^{(j)}\}_{j=1}^\infty\subset B(\Hilb)^n$, $\lambda^{(j)}\in\mathbb C^n$, then:
\begin{enumerate}
\item $\max_{j=1,2}\{k_\Phi(\tau^{(j)})\}\leq k_\Phi(\tau^{(1)}\oplus\tau^{(2)})\leq k_\Phi(\tau^{(1)})+k_\Phi(\tau^{(2)})$,
\item $k_\Phi(\bigoplus_{j=1}^\infty \tau_j) = \lim_{N\rightarrow\infty} k_\Phi(\bigoplus_{j=1}^N \tau_j)$, and
\item $k_\Phi(\tau^{(1)}\oplus\dots\oplus\tau^{(N)}) = k_\Phi((\tau^{(1)} - \lambda^{(1)} \tensor I)\oplus\dots\oplus(\tau^{(N)}- \lambda^{(N)} \tensor I))$.
\end{enumerate}
\end{lemma}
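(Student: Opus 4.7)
The plan is to deduce all three statements from Lemmas 2.5 and 2.6, together with two standard facts about symmetric norms: compressions and direct-sum paddings do not increase $|\cdot|_{\Phi}$, and the pinching inequality $|\sum_j P_j X P_j|_\Phi \leq |X|_\Phi$ holds for any family $\{P_j\}$ of mutually orthogonal projections summing to $I$ (via the averaging identity $\sum_j P_j X P_j = 2^{-N}\sum_{\varepsilon \in \{\pm 1\}^N} U_\varepsilon X U_\varepsilon^*$ with $U_\varepsilon := \sum_j \varepsilon_j P_j$ unitary, combined with unitary invariance of $|\cdot|_\Phi$). The three parts will then follow in order.

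For (1), I will treat the two inequalities separately. For the upper bound, Lemma 2.6 supplies sequences $A^{(j)}_\alpha \in \mathcal{R}_1^+$ on the $j$-th summand with $A^{(j)}_\alpha \to I_j$ in WOT and $\lim_\alpha|[A^{(j)}_\alpha,\tau^{(j)}]|_\Phi = k_\Phi(\tau^{(j)})$. Then $A^{(1)}_\alpha \oplus A^{(2)}_\alpha \in \mathcal{R}_1^+$ commutes with $\tau^{(1)}\oplus\tau^{(2)}$ up to $[A^{(1)}_\alpha,\tau^{(1)}]\oplus[A^{(2)}_\alpha,\tau^{(2)}]$; expressing this as a sum of two zero-padded operators, applying the triangle inequality, and then Lemma 2.5 gives the upper bound. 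For the lower bound, Lemma 2.6 supplies $A_\alpha$ realizing $k_\Phi(\tau^{(1)}\oplus\tau^{(2)})$; the compression $P_j A_\alpha P_j$ (with $P_j$ the $j$-th block projection) lies in $\mathcal{R}_1^+$ on the $j$-th summand and converges WOT to $I_j$, and because $P_j$ commutes with $\tau^{(1)}\oplus\tau^{(2)}$ the resulting commutator is the compression of $[A_\alpha,\tau^{(1)}\oplus\tau^{(2)}]$, whose symmetric norm is dominated by the original. Lemma 2.5 then yields $k_\Phi(\tau^{(j)}) \leq k_\Phi(\tau^{(1)}\oplus\tau^{(2)})$.

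For (2), monotonicity from (1) gives $\lim_{N} k_\Phi(\bigoplus_{j\leq N}\tau^{(j)}) \leq k_\Phi(\bigoplus_j \tau^{(j)})$ at once. For the reverse, I will perform a diagonal extraction: for each $M$, Lemma 2.6 supplies $(A^{(M)}_k)_k$ in $\mathcal{R}_1^+$ on $\bigoplus_{i\leq M}\Hilb_i$ converging WOT to $I$ and realizing $k_\Phi(\bigoplus_{i\leq M}\tau^{(i)})$ in the limit. Fixing a countable dense subset $\{v_i\}$ of $\Hilb$, I will pick $k_M$ large enough that the zero-extension $\hat{A}^{(M)}$ of $A^{(M)}_{k_M}$ satisfies $|\langle(\hat{A}^{(M)} - I)v_i, v_j\rangle| < 1/M$ for all $i,j \leq M$ (possible since $P_{\leq M}v_i \to v_i$) and $|[A^{(M)}_{k_M}, \bigoplus_{i\leq M}\tau^{(i)}]|_\Phi \leq k_\Phi(\bigoplus_{i\leq M}\tau^{(i)}) + 1/M$. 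Uniform contractivity upgrades WOT convergence on $\{v_i\}$ to all of $\Hilb$. Since $\hat{A}^{(M)}$ vanishes on the tail, $[\hat{A}^{(M)},\bigoplus_i\tau^{(i)}]$ has the same symmetric norm as $[A^{(M)}_{k_M},\bigoplus_{i\leq M}\tau^{(i)}]$, so Lemma 2.5 finishes the argument.

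For (3), set $\Lambda := \bigoplus_{j=1}^N \lambda^{(j)}\tensor I$ and let $P_j$ denote the projection onto the $j$-th summand. The obstruction is that $\Lambda$ is only block-scalar, so $[A,\Lambda]$ need not vanish unless $A$ is block diagonal; my strategy is to replace any $A\in\mathcal{R}_1^+$ by its block diagonal $\tilde{A}:=\sum_j P_j A P_j$, which remains in $\mathcal{R}_1^+$, commutes with every $P_j$, and hence with $\Lambda$. Pinching yields $|[\tilde{A},\bigoplus_j\tau^{(j)}]|_\Phi = |\sum_j P_j[A,\bigoplus_j\tau^{(j)}] P_j|_\Phi \leq |[A,\bigoplus_j\tau^{(j)}]|_\Phi$. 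Applied to an approximate unit $A_\alpha$ witnessing $k_\Phi(\bigoplus_j\tau^{(j)})$ by Lemma 2.6, the block-diagonalized $\tilde{A}_\alpha$ still converges WOT to $I$ and satisfies $[\tilde{A}_\alpha, \bigoplus_j(\tau^{(j)}-\lambda^{(j)}\tensor I)] = [\tilde{A}_\alpha, \bigoplus_j\tau^{(j)}]$; Lemma 2.5 then gives one direction, and rerunning with $-\lambda^{(j)}$ gives the other. I expect the pinching step to be the main obstacle, as it must be invoked carefully for a general symmetric norm $|\cdot|_\Phi$; the diagonal extraction in (2) is a secondary technicality.
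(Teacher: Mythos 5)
The paper itself gives no proof of this lemma; it is stated right after the remark that ``The following results are standard and can be found in \cite{voiculescu1979some},'' so there is no in-paper argument to compare against. Your reconstruction is correct, and it uses exactly the tools one would expect from Voiculescu's paper: compressions by the block projections for the lower bound in (1), direct sums of approximate units plus the triangle inequality for the upper bound in (1), zero-extension and a diagonal extraction for (2), and pinching for (3). The pinching inequality as stated (averaging over sign-unitaries $U_\varepsilon = \sum_j \varepsilon_j P_j$, then applying unitary invariance of $|\cdot|_\Phi$ and the triangle inequality) is the right justification for a general symmetric norm, and your observation that the pinched contraction $\tilde A$ commutes with the block-scalar $\Lambda$ is the heart of (3).

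One small caution in part (2): the condition $|\langle(\hat A^{(M)}-I)v_i,v_j\rangle|<1/M$ for \emph{all} $i,j\le M$ is stronger than the zero-extension construction actually delivers, because for indices $i$ near $M$ the truncation $P_{\le M}v_i$ need not yet be close to $v_i$, and that error is controlled by $M$, not by $k_M$. What you can arrange by choosing $k_M$ is that $\|(A^{(M)}_{k_M}-I)P_{\le M}v_i\|<1/M$ for $i\le M$; combined with $\|P_{\le M}v_i - v_i\|\to 0$, this gives, for each fixed $i$, $\hat A^{(M)}v_i\to v_i$ as $M\to\infty$, and uniform boundedness upgrades this to SOT (hence WOT) convergence on all of $\Hilb$. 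That pointwise convergence is all Lemma 2.5 requires, so the argument goes through; only the uniform-in-$i,j$ phrasing should be dropped.
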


\section{Obstruction Ideals for $\tau_{\Omega,H_f}$}

In this section we consider a gauge function $f$ with property $(R)$, and fractals $\Omega\subset \mathbb R^n$ that satisfy the regularity condition 
\[Cf(r) \leq H_f(B(x,r)\cap \Omega) \leq Df(r)\]
for every $x\in\Omega$, where $C,D$ are independent of $x$. The following lemma is essentially a restatement of results from \cite{voiculescu1979some} and \cite{david1990s}.

\begin{lemma}\label{lem_obstruction} Let $f:[0,+\infty)\rightarrow[0,+\infty)$ be a non-decreasing function with property $(R)$, and let $\pi = (\pi_k)_{k=1}^\infty$ be a regular non-increasing sequence such that $\pi_k\rightarrow 0$ and $\sum_{k=1}^\infty \pi_k = +\infty$. Suppose that

\[0<\inf_{m}m\pi_mf^{-1}(1/m)\leq\sup_{m} m\pi_mf^{-1}(1/m) < +\infty\]

\noindent Then $0<k_\pi(\tau_{\Omega,H_f})<+\infty$.

\begin{proof}

Using Theorem 4.6 from \cite{david1990s}, regularity of $\pi$ and the first inequality imply $0<k_{{\pi}}(\tau_{\Omega,H_f})$. We show that the last inequality implies $k_{{\pi}}(\tau_{\Omega,H_f})<+\infty$. The condition $H_f(B(x,r)\cap \Omega)\leq Df(r)$ allows one to cover $\Omega$ by disjoint Borel sets $\{\omega_{\ell,m}\}_{\ell=1}^{m^n}$ such that $H_{f}(\omega_{\ell,m})\leq D/m^n$ for every positive integer $m$. Consider the sequences of projections
\[P_{m}^{(\ell)}v = \langle v, \chi_{\omega_{\ell,m}} \rangle \frac{\chi_{\omega_{\ell,m}}}{||\chi_{\omega_{\ell,m}}||_2^2}, v\in L^2(\Omega, H_f)\]
and
\[P_m=\sum_{\ell=1}^{m^n} P_m^{(\ell)}\]
Observe that the sequence $P_m$ converges strongly to $I_{L^2(H_f,\Omega)}$. Write $T_j$ for the $j$th component of $\tau_{\Omega,H_f}$. Since the sets $\omega_{\ell,m}$ are disjoint, we see that $P_{m}^{(\ell)}T_j$ and $T_j P_{m}^{(\ell)}$ each have rank at most 1, hence $[P_m^{(\ell)}, T_j]$ has rank at most 2. Furthermore, note that $||[P_m,\tau_{\Omega,H_f}]|| \leq 2\text{diam}(\omega_{\ell,m})\leq2\sqrt{n}Df^{-1}(1/{m^n})$. Thus, if $\alpha>0$ such that $\sum_{k=1}^m \pi_k \leq \alpha m \pi_m$ for every $m\in \mathbb N$, we have
\begin{align*}
    k_{\pi}(\tau_{\Omega,H_f})\leq|[P_m,\tau_{\Omega,H_f}]|_{{\pi}} &\leq 2D\sqrt{n}f^{-1}(1/{m^n})\Phi_\pi(\underset{m^n \text{ times}}{\underbrace{1,1,\dots,1}},0,0,\dots)\\
    &= 2D\sqrt{n}f^{-1}(1/{m^n})\sum_{k=1}^{m^{n}}\pi_k\\
    &\leq2\alpha D\sqrt{n}\sup_{m} m\pi_mf^{-1}(1/m) < +\infty .\qedhere
\end{align*}
\end{proof}

\end{lemma}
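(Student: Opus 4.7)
The plan is to prove the two bounds separately. For the lower bound $k_\pi(\tau_{\Omega,H_f}) > 0$, I would invoke the machinery of David and Voiculescu: the Ahlfors-type two-sided regularity $H_f(B(x,r)\cap\Omega) \asymp f(r)$ together with regularity of $\pi$ and the hypothesis $\inf_m m\pi_m f^{-1}(1/m) > 0$ falls into the positivity criterion of \cite[Theorem 4.6]{david1990s}. So this direction reduces to citing an existing result and checking that its hypotheses are met.

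For the upper bound $k_\pi(\tau_{\Omega,H_f}) < +\infty$, I would construct explicit finite-rank contractions converging strongly to the identity and control their commutators uniformly. The upper regularity $H_f(B(x,r)\cap\Omega) \leq Df(r)$ supplies, for each $m$, a partition of $\Omega$ into $m^n$ disjoint Borel pieces $\omega_{\ell,m}$ of diameter on the order of $f^{-1}(1/m^n)$ and measure at most $D/m^n$. Let $P_m^{(\ell)}$ be the rank-one orthogonal projection onto $\mathrm{span}\{\chi_{\omega_{\ell,m}}\}$ in $L^2(\Omega,H_f)$, and set $P_m = \sum_\ell P_m^{(\ell)}$. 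The sequence $P_m$ converges strongly to the identity, so by Lemma~2.6 it suffices to bound $|[P_m,\tau_{\Omega,H_f}]|_\pi$ uniformly in $m$.

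Since the $\omega_{\ell,m}$ are disjoint, each $[P_m^{(\ell)},T_j]$ is rank at most $2$ with range supported in $L^2(\omega_{\ell,m})$, so $[P_m,T_j]$ is a direct sum of $m^n$ pieces with operator norm bounded by $\mathrm{diam}(\omega_{\ell,m}) \lesssim f^{-1}(1/m^n)$. The Lorentz-type norm of an operator of rank $N$ and operator norm $M$ is bounded by $M\sum_{k=1}^N \pi_k$, and regularity of $\pi$ converts $\sum_{k=1}^{m^n}\pi_k$ into at most $\alpha m^n \pi_{m^n}$. Substituting $M \lesssim f^{-1}(1/m^n)$ collapses the estimate to a constant times $\sup_m m\pi_m f^{-1}(1/m)$, which is finite by hypothesis.

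The main obstacle I expect is producing the partition: one wants pieces simultaneously controlled in diameter and $H_f$-measure, which requires a Whitney- or dyadic-style decomposition of $\Omega$ based on the upper regularity. Once such a partition is in hand, both the strong convergence $P_m \to I$ and the norm computation itself are routine. The lower bound is conceptually straightforward once one quotes \cite{david1990s}, but verifying that the theorem's hypotheses translate correctly into the gauge-function setting is where the regularity of both $\pi$ and $f$ must be used in tandem.
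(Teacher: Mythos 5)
Your proposal follows the paper's proof essentially verbatim: the lower bound is obtained by citing David--Voiculescu's positivity criterion, and the upper bound uses the same rank-one projections $P_m^{(\ell)}$ onto the indicator functions of a partition $\{\omega_{\ell,m}\}$, the same rank-$2$ bound on $[P_m^{(\ell)},T_j]$, the same diameter estimate, and the same use of regularity of $\pi$ to reduce the partial sum $\sum_{k\le m^n}\pi_k$ to $\alpha m^n\pi_{m^n}$. The one point you flag as an obstacle (building a partition with simultaneous diameter and measure control) is the same step the paper takes for granted from the upper Ahlfors regularity, so there is no substantive difference in approach.
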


Before proceeding further, we need to perform an exercise in calculus. Recall that a function $f$ is logarithmically-concave on an interval if $\log(f)$ is concave on that interval.

\begin{lemma} Let $f:[0,+\infty)\rightarrow [0,+\infty)$ be a function with property $(R)$, and define $h(x):=1/f^{-1}(1/x)$. Furthermore, assume that there exists $t_0>0$ such that $f$ is logarithmically-concave on $(0,t_0)$. Then there exists $y_0$ such that $h''(y)\leq 0$ for every $y>y_0$.

\begin{proof}

Observe that

\begin{align*}
h''(x) =& \frac{1}{f'(f^{-1}(1/x))}\Bigg[\frac{2}{(xf^{-1}(1/x))^3}\Bigg(\frac{1}{x^2f'(f^{-1}(1/x))}-f^{-1}(1/x)\Bigg)\Bigg]\\
&+\frac{1}{[xf^{-1}(1/x)]^2}\Bigg[\frac{1}{[f'(f^{-1}(1/x))]^2}\Bigg(\frac{f''(f^{-1}(1/x))}{x^2f'(f^{-1}(1/x))}\Bigg)\Bigg].
\end{align*}\\
Setting $t=f^{-1}(1/x)$, we obtain

\begin{align*}
h''(t) &= \Bigg(\frac{f(t)}{tf'(t)}\Bigg)^3\Big(2[f(t))]^2f'(t)+tf(t)f''(t)-2t[f'(t)]^2\Big)\\
&=\Bigg(\frac{f(t)}{tf'(t)}\Bigg)^3\Big(tf'(t)\Big[\frac{2(f(t))^2}{t}-f'(t)\Big]+t\Big[f(t)f''(t)-(f'(t))^2\Big]\Big).
\end{align*}
We check that there exists $t_0$ so that $h''(t)<0$ for all $t<t_0$. Note that the second summand is negative on the interval $(0,t_0)$ by logarithmic concavity. Furthermore, since $f$ is continuous at 0, there exists $t_1$ such that for all $t<t_1$, $2f(t) < tf'(t)/f(t)$. Thus the first summand is negative on the interval $(0,t_1)$. Choosing $y_0=1/f(\min\{t_0,t_1\})$ yields the result.
\end{proof}

\end{lemma}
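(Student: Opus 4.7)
The proof is a careful calculus computation. The plan is to differentiate $h(x)=1/f^{-1}(1/x)$ twice, rewrite via the substitution $t=f^{-1}(1/x)$ (so $x\to\infty$ corresponds to $t\to 0^{+}$), and read off the sign of $h''$ from property $(R)$ together with the logarithmic concavity hypothesis. Using the chain and quotient rules with $(f^{-1})'(u)=1/f'(f^{-1}(u))$ and $(f^{-1})''(u)=-f''(f^{-1}(u))/[f'(f^{-1}(u))]^{3}$, then substituting $t=f^{-1}(1/x)$ and simplifying, I expect to arrive at the factored form
\[
h''(x)=\left(\frac{f(t)}{tf'(t)}\right)^{3}\left\{tf'(t)\left(\frac{2f(t)^{2}}{t}-f'(t)\right)+t\bigl(f(t)f''(t)-f'(t)^{2}\bigr)\right\}.
\]
The prefactor is strictly positive for $t\in(0,\infty)$ because $f,f'>0$ there, so the sign of $h''(x)$ is determined by the expression in braces.

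Next I would verify that each of the two summands in braces is non-positive for small $t$. The second summand equals $t\,f(t)^{2}(\log f)''(t)$ via the identity $f(t)f''(t)-f'(t)^{2}=f(t)^{2}(\log f)''(t)$, so it is $\leq 0$ on $(0,t_{0})$ by the logarithmic concavity hypothesis. For the first summand, convexity of $f$ with $f(0)=0$ yields $f(t)\leq tf'(t)$ (the tangent at $t$ lies below $f$ at $0$), so $f(t)/t\leq f'(t)$; multiplying by $2f(t)$ gives $2f(t)^{2}/t\leq 2f(t)f'(t)$. Continuity of $f$ at $0$ with $f(0)=0$ then provides $t_{1}>0$ with $2f(t)<1$ on $(0,t_{1})$, whence $2f(t)f'(t)<f'(t)$ on this interval. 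Combining, the bracket is $\leq 0$ for $t<\min(t_{0},t_{1})$, and so $h''(x)\leq 0$ for $x>y_{0}:=1/f(\min(t_{0},t_{1}))$.

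The main obstacle is purely the algebraic bookkeeping of the two differentiations and recognising the useful factorization that cleanly separates the logarithmic-concavity contribution $f(t)f''(t)-f'(t)^{2}$ from the lower-order piece $2f(t)^{2}/t-f'(t)$; once that is done, the sign analysis is immediate from the stated hypotheses.
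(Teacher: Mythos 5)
Your proof is correct and follows essentially the same route as the paper: the identical factorization of $h''$ after the substitution $t=f^{-1}(1/x)$, the log-concavity argument for the second summand, and a small-$t$ estimate for the first. Your treatment of the first summand is in fact slightly more careful than the paper's, since you explicitly invoke convexity to get $f(t)/t\leq f'(t)$ before using $2f(t)<1$, whereas the paper attributes the inequality $2f(t)<tf'(t)/f(t)$ to continuity alone, which glosses over the convexity input needed to keep $tf'(t)/f(t)$ bounded below.
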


\begin{corollary}With the notation of Lemma $3.2$, set $\rho_k := h'(k)$, $k\in \mathbb N$. Then for some positive integer $N$, $\rho:=\{\rho_k\}_{k=N}^\infty$ defines a symmetric-norming function $\Phi_\rho$.

\begin{proof}
By Lemma 3.2, we know that for some positive integer $N$, the sequence $\rho = (\rho_k)_{k=N}^\infty$ is a nonincreasing sequence. Thus it suffices to show that $\rho_k\rightarrow 0$ and $\sum_k \rho_k = +\infty$. If $t=f^{-1}(1/x)$, we see that:
\[h'(x) = \frac{1}{(xf^{-1}(1/x))^2}\cdot \frac{1}{f'(f^{-1}(1/x))}= \bigg(\frac{f(t)}{t}\bigg)^2 \cdot \frac{1}{f'(t)} \overset{t\rightarrow 0}{\longrightarrow}0.\]
Next, note that
\[\sum_{k=N}^\infty \rho_k \geq \int_N^\infty h'(x) dx = \frac{1}{f^{-1}(1/x)} \Bigg|_{N}^\infty = +\infty,\]
and this completes the proof.
\end{proof}

\end{corollary}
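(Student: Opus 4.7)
The plan is to verify the three defining conditions for a sequence to generate a symmetric-norming function as laid out in Section 2.1: the tail sequence $\{\rho_k\}_{k=N}^\infty$ must be nonincreasing, tend to zero, and have divergent sum. Each property follows from differentiating $h(x) = 1/f^{-1}(1/x)$ and applying property $(R)$.

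First, I would dispose of monotonicity immediately by invoking Lemma 3.2: choose $N \in \mathbb{N}$ larger than the $y_0$ provided by that lemma, so that $h''(y) \le 0$ on $[N,\infty)$. Then $h'$ is nonincreasing there, and so is $\rho_k = h'(k)$ for $k \ge N$.

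Next, for $\rho_k \to 0$, I would compute $h'$ via the chain rule and make the substitution $t = f^{-1}(1/x)$, yielding
\[
h'(x) = \left(\frac{f(t)}{t}\right)^{\!2} \cdot \frac{1}{f'(t)}.
\]
As $x \to \infty$ we have $t \to 0$, and this is an indeterminate $0\cdot\infty$ form since property $(R)$ gives $f'(0) = 0$ but also $f'(t) \to 0$ in the denominator. The resolution is to use convexity of $f$: since $f(0) = 0$ and $f'$ is nondecreasing, $f(t) = \int_0^t f'(s)\,ds \le tf'(t)$, hence
\[
\left(\frac{f(t)}{t}\right)^{\!2} \cdot \frac{1}{f'(t)} \le \frac{f(t)}{t} \xrightarrow{t\to 0} f'(0) = 0
\]
by L'Hopital. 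This step is the main obstacle, as it is the one place where both convexity and $f'(0) = 0$ are genuinely needed.

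Finally, for $\sum_k \rho_k = +\infty$, I would use the integral comparison test. Since $h'$ is positive and nonincreasing on $[N,\infty)$,
\[
\sum_{k=N}^\infty \rho_k \ge \int_N^\infty h'(x)\,dx = \lim_{X\to\infty} h(X) - h(N) = \lim_{X\to\infty} \frac{1}{f^{-1}(1/X)} - \frac{1}{f^{-1}(1/N)}.
\]
Because $f^{-1}$ is continuous with $f^{-1}(0) = 0$, the quantity $1/f^{-1}(1/X)$ diverges as $X \to \infty$, so the series diverges. This completes the verification that $\Phi_\rho$ is a well-defined symmetric norming function.
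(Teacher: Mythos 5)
Your proof follows the paper's argument exactly: Lemma 3.2 supplies monotonicity, the substitution $t = f^{-1}(1/x)$ gives the same closed form $h'(x) = (f(t)/t)^2 / f'(t)$, and the integral comparison $\sum_k \rho_k \geq \int_N^\infty h'\,dx = +\infty$ handles divergence. The one genuine improvement is that you actually justify $h'(x) \to 0$ via the convexity bound $f(t) \le tf'(t)$ (giving $(f(t)/t)^2/f'(t) \le f(t)/t \to f'(0) = 0$), resolving an indeterminate $0\cdot\infty$ form that the paper asserts to vanish without comment.
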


We next state two results that are slight extensions of arguments found in \cite{bercovici1989analogue}.

\begin{proposition} Let $\rho$ be as defined in Corollary $3.3$, and let $\pi=\{\pi_{k}\}_{k=1}^\infty$ be a non-increasing, real-valued sequence such that $\pi_k\rightarrow 0$ and $\sum_k \pi_k=+\infty$. If $J_{\pi}$ is not contained in $J_{\rho}$, then
\[\liminf_m f^{-1}\bigg(\frac{1}{m^n}\bigg)\sum_{k=1}^{m^n} \pi_k=0\] 
\begin{proof}

Since 
\[\sum_{k=1}^{m^n}\rho_k\leq \int_{0}^{m^n}h'(x)dx\leq \frac{1}{f^{-1}(1/m^n)},\] 
we have
\[\liminf_m f^{-1}\bigg(\frac{1}{m^n}\bigg)\sum_{k=1}^{m^n} \pi_k \leq \liminf_m \frac{\sum_{k=1}^{m^n} \pi_k}{\sum_{k=1}^{m^n}\rho_k}.\]
Thus it suffices to show that the right hand side above equals 0. Suppose to the contrary that $c\sum_{k=1}^{m^n} \pi_k \geq \sum_{k=1}^{m^n}\rho_k$ for some $c>0$. Let $\tau$ be a positive $n$-tuple of operators in $J_\pi\setminus J_\rho$ with eigenvalues $\{\lambda_k\}_{k=1}^\infty$ listed in nonincreasing order. Then we have
\begin{align*}|T|_\rho=\sum_{k=1}^\infty\lambda_k\rho_k&=\sum_{k=1}^\infty\sum_{\ell=1}^{k}(\lambda_k-\lambda_{k+1})\rho_\ell\\
&\leq c\sum_{k=1}^\infty\sum_{\ell=1}^{k}(\lambda_k-\lambda_{k+1})\pi_\ell\\
&\leq c\sum_{k=1}^\infty\lambda_k\pi_k<+\infty
\end{align*}
contrary to the fact that $T\notin J_\rho$.
\end{proof}

\end{proposition}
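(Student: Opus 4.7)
The plan is to prove the contrapositive: assuming $\liminf_m f^{-1}(1/m^n)\sum_{k=1}^{m^n}\pi_k>0$, I will derive $J_\pi\subset J_\rho$, which contradicts the hypothesis.

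First, I would replace the factor $f^{-1}(1/m^n)$ by a quantity involving partial sums of $\rho$. Since $h(x)=1/f^{-1}(1/x)$ satisfies $h(x)\to+\infty$ and, by Lemma 3.2, $h'$ is eventually nonnegative and nonincreasing, the integral comparison for monotone sequences yields $\sum_{k=1}^{m^n}\rho_k \leq h(m^n) + O(1) = 1/f^{-1}(1/m^n) + O(1)$. Since $1/f^{-1}(1/m^n)\to\infty$, this gives
\[f^{-1}\!\bigl(1/m^n\bigr)\sum_{k=1}^{m^n}\pi_k \;\leq\; (1+o(1))\frac{\sum_{k=1}^{m^n}\pi_k}{\sum_{k=1}^{m^n}\rho_k},\]
so it suffices to show the liminf of the right-hand side is zero. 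I therefore assume for contradiction that there exists $c>0$ with $\sum_{k=1}^{m^n}\rho_k \leq c\sum_{k=1}^{m^n}\pi_k$ for all sufficiently large $m$.

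Next, I would upgrade this bound at the subsequence $\{m^n\}$ to a bound $\sum_{k=1}^{N}\rho_k \leq C\sum_{k=1}^{N}\pi_k$ for \emph{all} $N\in\bN$, at the cost of enlarging the constant. For any $N$, sandwich $m^n\leq N<(m{+}1)^n$; monotonicity of $\rho$ and $\pi$ combined with the fact that the consecutive subsequence ratios $(m{+}1)^n/m^n\to 1$ allow one to pass from the partial sums at $(m{+}1)^n$ back down to those at $N$, and from $m^n$ up to $N$, with only a bounded loss. This is an elementary bookkeeping argument, and I expect it to be the main technical nuisance of the proof.

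Finally, for any positive $n$-tuple of operators $\tau\in J_\pi$ with eigenvalues $\{\lambda_k\}_{k=1}^\infty$ listed in nonincreasing order (so $\lambda_k\to 0$), Abel summation together with the upgraded bound gives
\[|\tau|_\rho \;=\; \sum_{k=1}^\infty\lambda_k\rho_k \;=\; \sum_{k=1}^\infty(\lambda_k-\lambda_{k+1})\sum_{\ell=1}^k\rho_\ell \;\leq\; C\sum_{k=1}^\infty(\lambda_k-\lambda_{k+1})\sum_{\ell=1}^k\pi_\ell \;=\; C|\tau|_\pi<+\infty.\]
Hence $\tau\in J_\rho$, showing $J_\pi\subset J_\rho$ and producing the desired contradiction. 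Every step apart from the subsequence-to-all-indices upgrade is either a direct computation or the standard summation-by-parts trick for Lorentz-type norms.
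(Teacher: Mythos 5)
Your proposal is correct and follows essentially the same route as the paper: bound the partial sums $\sum_{k\le m^n}\rho_k$ by the integral of $h'$ to reduce the claim to showing $\liminf_m \bigl(\sum_{k\le m^n}\pi_k\bigr)/\bigl(\sum_{k\le m^n}\rho_k\bigr)=0$, then argue by contradiction via summation by parts that a uniform comparison $\sum\rho_k\le c\sum\pi_k$ would force $J_\pi\subset J_\rho$.

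The one place where you genuinely add to the paper's argument is the ``upgrade'' step, and you are right to flag it. The paper's Abel-summation chain
\[
\sum_{k=1}^{\infty}(\lambda_k-\lambda_{k+1})\sum_{\ell=1}^{k}\rho_\ell
\;\leq\;
c\sum_{k=1}^{\infty}(\lambda_k-\lambda_{k+1})\sum_{\ell=1}^{k}\pi_\ell
\]
requires the comparison $\sum_{\ell=1}^{k}\rho_\ell\le c\sum_{\ell=1}^{k}\pi_\ell$ for \emph{every} $k$, while the contradiction hypothesis only delivers it along the subsequence $k=m^n$. The paper passes over this silently. Your interpolation --- sandwich $m^n\le N<(m+1)^n$, use monotonicity of $\pi$ and $\rho$, and the estimate $\sum_{m^n<\ell\le(m+1)^n}\pi_\ell\le\bigl((1+1/m)^n-1\bigr)\sum_{\ell\le m^n}\pi_\ell$ coming from $\pi_{m^n}\le m^{-n}\sum_{\ell\le m^n}\pi_\ell$ --- works and yields a uniform constant (e.g.\ $2^n c$) valid for all $N$ beyond a finite range, after which the finitely many remaining $N$ are absorbed by enlarging the constant. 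Apart from this patched step, the two arguments coincide; you simply phrase the whole thing as a direct contrapositive rather than ``suppose to the contrary,'' which is cosmetic.

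Two tiny points of hygiene. First, when you write $\sum_{k=1}^{m^n}\rho_k\le h(m^n)+O(1)$ and then divide, be sure to use the direction $f^{-1}(1/m^n)\le 1/\sum\rho_k$ (which follows directly once the $O(1)$ is absorbed, since $h(m^n)\to\infty$); writing $(1+o(1))$ on the wrong side would invert the inequality. Second, the starting index of $\rho$ is $N$ rather than $1$ (Corollary 3.3), so the partial sums should really start at $N$; this affects nothing but is worth keeping consistent.
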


\begin{corollary} Let $f$ be a logarithmically-concave gauge function with property $(R)$, and let $\rho, \pi$ be as in Proposition $3.4$. If $J_{\pi}$ is not contained in $J_\rho$, then $k_\pi(\tau_{\Omega,H_f}) = 0$.

\begin{proof}

This argument mirrors that of Lemma 3.1.  Let $\{\omega_{\ell,m}\}_\ell$ be a partition of $\Omega$ such that $H_{f}(\omega_{\ell,m})\leq D/m^n$. Consider the sequence of projections 
\[P_mv=\sum_{\ell=1}^{m^n} \langle v, \chi_{\omega_{\ell,m}} \rangle \frac{\chi_{\omega_{\ell,m}}}{||\chi_{\omega_{\ell,m}}||_2^2}, v\in L^2(\Omega, H_f). \]
Observe that
\begin{align*}
k_{\pi}(\tau_{\Omega,H_f})\leq|[P_m,\tau_{\Omega,H_f}]|_{\Phi_{\pi}} &\leq  2D\sqrt{n}f^{-1}(1/{m^n})\Phi_\pi(\underset{m^n \text{ times}}{1,1,\dots,1},0,0,\dots)\\
&= 2D\sqrt{n}f^{-1}(1/{m^n})\sum_{k=1}^{m^{n}}\pi_k \rightarrow 0.
\end{align*}
Thus by Lemma 2.5 we have $k_{\pi}(\tau_{\Omega,H_f})=0$.
\end{proof}

\end{corollary}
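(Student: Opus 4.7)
The plan is to follow essentially the template of Lemma 3.1, constructing a sequence of finite rank projections $P_m$ converging strongly to the identity and controlling the commutators $[P_m, \tau_{\Omega,H_f}]$ via a combination of a diameter estimate and $\Phi_\pi$ applied to a sequence consisting of $m^n$ ones followed by zeros. The difference from Lemma 3.1 is that now we want the resulting upper bound to go to zero rather than just be finite, and this is exactly where Proposition 3.4 should enter.

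First I would use the upper regularity bound $H_f(B(x,r)\cap\Omega)\leq Df(r)$ to partition $\Omega$ into $m^n$ Borel pieces $\{\omega_{\ell,m}\}$ of $H_f$-measure at most $D/m^n$ and of diameter at most $\sqrt{n}\,f^{-1}(1/m^n)$. For each $\ell$, let $P_m^{(\ell)}$ denote the rank-one orthogonal projection onto $\chi_{\omega_{\ell,m}}$ in $L^2(\Omega,H_f)$, and set $P_m := \sum_\ell P_m^{(\ell)}$. Since the $\omega_{\ell,m}$ are disjoint, $P_m$ is a finite rank projection converging strongly (and hence weakly) to the identity. Exactly as in Lemma 3.1, $[P_m, T_j]$ has rank at most $2m^n$ and operator norm bounded by $2\sqrt{n}\,Df^{-1}(1/m^n)$.

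Applying $\Phi_\pi$ to a singular value sequence dominated by $m^n$ copies of the operator norm bound yields
\[|[P_m,\tau_{\Omega,H_f}]|_{\Phi_\pi} \leq 2D\sqrt{n}\,f^{-1}(1/m^n)\sum_{k=1}^{m^n}\pi_k.\]
At this point I would invoke Proposition 3.4, whose hypothesis $J_\pi \not\subset J_\rho$ is precisely the standing assumption here, to conclude that the right hand side has $\liminf$ equal to $0$. Combined with Lemma 2.5, which lets us estimate $k_\pi(\tau_{\Omega,H_f})$ from above by $\liminf_m |[P_m, \tau_{\Omega,H_f}]|_{\Phi_\pi}$, this gives $k_\pi(\tau_{\Omega,H_f}) = 0$.

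The only nontrivial step in this plan is the appeal to Proposition 3.4; everything else is a direct rerun of the Lemma 3.1 construction. In that sense the real work has already been performed upstream, namely in translating the failure $J_\pi\not\subset J_\rho$ into the partial-sum vanishing statement $\liminf_m f^{-1}(1/m^n)\sum_{k=1}^{m^n}\pi_k=0$, via the comparison $\sum_{k=1}^{m^n}\rho_k \leq 1/f^{-1}(1/m^n)$ coming from $\rho_k=h'(k)$.
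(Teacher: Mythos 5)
Your proposal matches the paper's proof essentially line for line: same regularity-based partition $\{\omega_{\ell,m}\}$, same rank-one projections $P_m^{(\ell)}$ summed to $P_m$, same diameter/operator-norm bound leading to the estimate $2D\sqrt{n}\,f^{-1}(1/m^n)\sum_{k=1}^{m^n}\pi_k$, and the same closing appeal to Proposition 3.4 and Lemma 2.5. You are, if anything, slightly more careful than the paper in noting explicitly that Proposition 3.4 only yields a $\liminf$ of zero rather than a full limit, but this suffices since the quasicentral modulus is itself a $\liminf$.
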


\begin{theorem}

Let $f:[0,\infty)\rightarrow[0,\infty)$ be a logarithmically-concave function with property $(R)$, and let $\Omega\subset \mathbb R^n$ be a Borel set so that $0<H_f(\Omega)<+\infty$. Then $\mathfrak{S}_\rho$ is an obstruction ideal for $\tau_{\Omega,H_f}$. Furthermore, if $\Phi\in \mathcal{SN}$, then $k_\Phi(\tau_{\Omega,H_f})\neq 0$ if and only if $J_\Phi \subset J_\rho$\\
\begin{proof}

It suffices to show that the inequalities from Lemma \ref{lem_obstruction} are satisfied.\\
Observe that for every $m\in\mathbb N$,
\begin{align*}
m\pi_mf^{-1}(1/m)&\leq f^{-1}\Big(\frac{1}{m}\Big)\sum_{k=N}^{m}\rho_k\\
&\leq f^{-1}\Big(\frac{1}{m}\Big)\int_{N}^{m}h'(x)dx\\
&= f^{-1}\Big(\frac{1}{m}\Big)\Bigg[\frac{1}{f^{-1}(1/x)} \Bigg|_{N}^{m} \longrightarrow 1
\end{align*}
as $m\rightarrow \infty$. Thus $\sup_mm\pi_mf^{-1}(1/m)<+\infty$.
To verify the other nontrivial inequality, set $t_m=f^{-1}(1/m)$ so that
\[m\pi_mf^{-1}(1/m) = \frac{f(t_m)}{t_mf'(t_m)}.\] 
Since f has property $(R)$, the right hand tends to 1 and the result follows.

	Finally, since $k_\rho(\tau_{\Omega,H_f}) \neq 0$, all ideals smaller are obstruction ideals and hence $k_\Phi(\tau_{\Omega,H_f}) \neq 0$. To complete the proof, assume that $J_\Phi$ is not contained in $J_\rho$. From \cite{voiculescu1990existence}, we know that there exists a non-increasing, real-valued sequence such that $\pi=\{\pi_{k}\}_{k=1}^\infty$ such that $\pi_k\rightarrow 0$ and $\sum_k \pi_k=+\infty$ with the property that $J_\Phi\subset J_\pi$. Thus $J_\pi$ is not contained in $J_\rho$, and hence $k_\Phi(\tau_{\Omega,H_f})\leq k_\pi(\tau_{\Omega,H_f}) = 0$.
\end{proof}

\end{theorem}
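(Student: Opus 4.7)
The plan is to use the sequence $\rho$ from Corollary 3.3 as the maximal obstruction sequence, deducing the first assertion directly from Lemma \ref{lem_obstruction} and the characterization from Corollary 3.5 combined with Voiculescu's existence theorem in \cite{voiculescu1990existence}.

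To show $\mathfrak{S}_\rho$ is an obstruction ideal, I would verify that $\rho$ satisfies the hypotheses of Lemma \ref{lem_obstruction} applied with $\pi = \rho$. Writing $t_m := f^{-1}(1/m)$ and using the formula $h'(x) = (f(t)/t)^2/f'(t)$ recorded in Corollary 3.3, one finds
\[
m \rho_m f^{-1}(1/m) \;=\; \frac{f(t_m)}{t_m f'(t_m)}.
\]
The upper bound on this quantity comes from the integral comparison $m \rho_m \leq \sum_{k=N}^{m} \rho_k \leq \int_{N}^m h'(x)\,dx = 1/f^{-1}(1/m) - 1/f^{-1}(1/N)$, multiplied through by $f^{-1}(1/m)$. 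The lower bound and the regularity of $\rho$ both reduce to showing that $tf'(t)/f(t)$ stays bounded above as $t \to 0$, which under property $(R)$ is an elementary calculus exercise. Lemma \ref{lem_obstruction} then delivers $0 < k_\rho(\tau_{\Omega,H_f}) < +\infty$.

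For the ``if and only if'' statement, the ``if'' direction is a monotonicity fact: if $J_\Phi \subset J_\rho$, then the corresponding containment of ideals yields $k_\rho(\tau_{\Omega,H_f}) \leq C\,k_\Phi(\tau_{\Omega,H_f})$ for some $C > 0$, so the obstruction property for $\rho$ lifts to $\Phi$. For the converse, assume $J_\Phi \not\subset J_\rho$. By \cite{voiculescu1990existence}, there is a non-increasing sequence $\pi = \{\pi_k\}_{k=1}^\infty$ with $\pi_k \to 0$ and $\sum_k \pi_k = +\infty$ satisfying $J_\Phi \subset J_\pi$. This forces $J_\pi \not\subset J_\rho$, so Corollary 3.5 gives $k_\pi(\tau_{\Omega,H_f}) = 0$, and hence $k_\Phi(\tau_{\Omega,H_f}) \leq C\,k_\pi(\tau_{\Omega,H_f}) = 0$.

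The main obstacle is verifying the lower bound $\inf_m m\rho_m f^{-1}(1/m) > 0$, which requires controlling $f(t)/(tf'(t))$ from below near $t=0$. This is where property $(R)$ and the logarithmic concavity of $f$ enter, and once this ratio is pinched between positive constants, the remainder of the argument is essentially bookkeeping with the earlier lemmas.
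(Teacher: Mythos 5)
Your proposal mirrors the paper's proof essentially step by step: verify the hypotheses of Lemma~\ref{lem_obstruction} with $\pi=\rho$, obtain the upper bound on $m\rho_m f^{-1}(1/m)$ via the integral comparison $\sum_{k=N}^m \rho_k \le \int_N^m h'(x)\,dx = 1/f^{-1}(1/m)-1/f^{-1}(1/N)$, derive the lower bound by substituting $t_m=f^{-1}(1/m)$ so that $m\rho_m f^{-1}(1/m)=f(t_m)/(t_m f'(t_m))$, and finish the bi-conditional via monotonicity of $k_\Phi$ in the ideal together with \cite{voiculescu1990existence} and Corollary 3.5. You make one small improvement over the paper by explicitly observing that regularity of $\rho$ (also a hypothesis of Lemma~\ref{lem_obstruction}, but not verified in the paper's proof) reduces to the same bound on $tf'(t)/f(t)$.

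One caution, however, about the step you call an ``elementary calculus exercise'': boundedness of $tf'(t)/f(t)$ near $t=0$ is \emph{not} automatic from property $(R)$ together with logarithmic concavity. The function $f(t)=e^{-1/t}$ is $C^2$, convex, logarithmically concave on $(0,\tfrac12)$, and has $f'(0)=0$, yet $tf'(t)/f(t)=1/t\to\infty$, so the lower bound $\inf_m m\rho_m f^{-1}(1/m)>0$ fails (indeed for this $f$ one gets $\rho_k=1/k$, which is not a regular sequence). The paper's own assertion that $f(t_m)/(t_m f'(t_m))\to 1$ has the same problem, and is moreover incorrect even when true-in-spirit: for $f(t)=t^s$ the ratio is identically $1/s$. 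The argument does go through once one assumes property $(R_s)$, under which $tf'(t)/f(t)\to s$, and that is evidently the intended hypothesis (it is the standing assumption in Sections 4 and 5); but as stated the hypothesis ``property $(R)$ plus logarithmic concavity'' does not deliver this step, and your proposal inherits this gap from the paper.
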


\begin{example}

Suppose that $f(x) = x/\log(e/x)$ for $x\in(0,1]$, and let $C_f$ be the symmetric generalized Cantor set defined in Section 2.3. If $W(x)$ is defined as the compositional inverse of the real valued function $xe^x$ and $\pi=\{e/(W(ek)+1)\}_{k=N}^{\infty}$, then $J_\pi$ is the maximum obstruction ideal for the operator $\tau_{C_f}$.

\end{example}

\begin{proof}

Clearly all the hypotheses in Theorem 5 are satisfied, so it suffices to show that $\Big(\frac{1}{g^{-1}(1/x)}\Big)'\Big|_{x=k}=\frac{e}{W(ek)+1}$. Indeed, this simply follows from the identity $e^{W(x)} = \frac{x}{W(x)}$.
\end{proof}

\section{Ampliation Homogeneity for $k_\rho$}

	To begin, we first show a kind of ampliation homogeneity for some quasicentral moduli which are related to $k_\rho$ ($\rho$ as in Section 3). For the remainder of the paper, we assume our gauge functions $f$ have property (R$_{s}$). Recall the notation $h(x)=1/(f^{-1}(1/x))$.

\begin{lemma} 
Let $f:[0,\infty)\rightarrow[0,\infty)$ be a logarithmically-concave function with property $(R_s)$. For a fixed $\varepsilon>0$, let $\rho^{(\varepsilon)} = \{\rho_k^{(\varepsilon)}\}_{k=N}^\infty$ with $\rho_k^{(\varepsilon)}=h'(k)$ and choose $N$ so that for every $k>N$ and for every positive integer $m$ we have
\[\bigg|\frac{f^{-1}(1/k)}{f^{-1}(1/mk)} - m^{1/s}\bigg| < \varepsilon. \]\\
If $X_j$ is a sequence of operators such that $||X_j|| \rightarrow 0$ and $|X_j|_{\rho^{(\varepsilon)}} < C < +\infty$, then 
\[\lim_{j \rightarrow \infty} \bigg||X_j \tensor I_m |_{\rho^{(\varepsilon)}} - m^{1/s}|X_j|_{\rho^{(\varepsilon)}}\bigg| \leq \varepsilon C.\]\\
\end{lemma}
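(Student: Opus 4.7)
The plan is to exploit the fact that the singular values of $X_j \tensor I_m$ are precisely the singular values $(s_k)_k$ of $X_j$, each repeated $m$ times. Writing $s_k := s_k(X_j)$ in nonincreasing order and introducing the cumulative weights $R_k := \sum_{\ell \leq k} \rho_\ell^{(\varepsilon)}$ (with the convention $\rho_\ell^{(\varepsilon)} = 0$ for $\ell < N$), Abel summation gives the clean identities
\[
|X_j|_{\rho^{(\varepsilon)}} = \sum_{k \geq 1} (s_k - s_{k+1})\, R_k \qquad \text{and} \qquad |X_j \tensor I_m|_{\rho^{(\varepsilon)}} = \sum_{k \geq 1} (s_k - s_{k+1})\, R_{mk}.
\]
The boundary term in the summation by parts vanishes because $R_K/R_{K/2}$ is bounded (by Lemma 2.3) while $\sum_k \rho_k^{(\varepsilon)} s_k$ converges. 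Subtracting, the task reduces to a term-by-term control of $R_{mk} - m^{1/s} R_k$.

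For this, I would invoke Lemma 3.2: since $h$ is eventually concave, $h'$ is eventually decreasing, and a standard integral comparison yields $R_k = h(k) + O(1)$ with the error uniform in $k$. Translating the hypothesis on $N$ via $h(k) = 1/f^{-1}(1/k)$ gives $|h(mk) - m^{1/s} h(k)| < \varepsilon\, h(k)$ for every $k > N$, and therefore
\[
|R_{mk} - m^{1/s} R_k| \leq \varepsilon\, R_k + O(1) \qquad (k > N),
\]
with the $O(1)$ constant depending on $m$ and $N$ but not on $k$. The finitely many indices $k \leq N$ each contribute at most a constant to this difference.

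Plugging these estimates into the Abel-summed difference, and using $\sum_k (s_k - s_{k+1}) = s_1 = ||X_j||$ together with $\sum_k (s_k - s_{k+1})\, R_k = |X_j|_{\rho^{(\varepsilon)}} \leq C$, one obtains
\[
\bigg||X_j \tensor I_m|_{\rho^{(\varepsilon)}} - m^{1/s}|X_j|_{\rho^{(\varepsilon)}}\bigg| \leq \varepsilon\, |X_j|_{\rho^{(\varepsilon)}} + O(||X_j||) \leq \varepsilon C + O(||X_j||).
\]
Taking $j \to \infty$ kills the $O(||X_j||)$ term by the assumption $||X_j|| \to 0$, which yields the desired bound $\varepsilon C$. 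The main technical step is the uniform integral-to-sum estimate $R_k = h(k) + O(1)$, which is precisely where the eventual concavity of $h$ from Lemma 3.2 becomes essential; everything else is bookkeeping around Abel summation, with the boundary and the finitely many low-index contributions absorbed by the smallness of $||X_j||$.
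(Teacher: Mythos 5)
Your proposal is correct and follows essentially the same route as the paper's proof: Abel summation to convert to cumulative sums $R_k$ and $R_{mk}$, integral comparison between $R_k$ and $h(k)$ using the eventual concavity of $h$ from Lemma 3.2, the index-regular-variation hypothesis on $N$ to control $R_{mk} - m^{1/s}R_k$, and finally the observation that the $k$-independent remainder is proportional to $\|X_j\|$ (hence dies in the limit) while the $k$-dependent remainder is $\varepsilon$ times a quantity dominated by $R_k$ and hence contributes at most $\varepsilon C$. The paper carries the error terms $D_\pm(k) = \rho_N \pm \varepsilon k\rho_k - (1-m^{1/s})/f^{-1}(1/N)$ explicitly rather than packaging them as $O(1)$, and uses $k\rho_k \le R_k$ (monotonicity of $\rho$) where you use $h(k) \approx R_k$, but these are the same estimate in slightly different clothing. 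One small remark: your invocation of Lemma 2.3 to kill the boundary term via $R_K/R_{K/2}$ being bounded is unnecessary — for fixed $j$, $X_j$ is compact (since $|X_j|_{\rho^{(\varepsilon)}} < \infty$ and $\sum\rho_k = \infty$), so $s_K \to 0$ and $\sum s_k\rho_k < \infty$ already force $s_K R_K \to 0$ by a standard tail argument.
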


\begin{proof}

If $s_k^{(j)}$ are the singular values of $X_j$ listed in nonincreasing order, then

\begin{align*}
|X_j \tensor I_m |_{\rho^{(\varepsilon)}} &= \sum_{k=1}^\infty s_k^{(j)} \sum_{\ell = (k-1)m +N}^{mk} \rho_{\ell}^{(\varepsilon)}\\
&=\sum_{k=1}^\infty s_k^{(j)} \Bigg(\sum_{\ell =N}^{mk} \rho_{\ell}^{(\varepsilon)} - \sum_{\ell = N}^{(k-1)m +N} \rho_{\ell}^{(\varepsilon)}\Bigg)\\
&= \sum_{k=1}^\infty \big(s_k^{(j)} - s_{k+1}^{(j)}\big) \sum_{\ell = N}^{mk} \rho_{\ell}^{(\varepsilon)}.\\
\end{align*}
Next, for $k>N$ observe that
\[\bigg|\frac{1}{f^{-1}(1/mk)} - \frac{m^{1/s}}{f^{-1}(1/k)}\bigg| < \frac{\varepsilon}{f^{-1}(1/k)} < \varepsilon k{\rho_k}.\]
The following lines of inequalities bound $\sum_{\ell = N}^{mk} \rho_{\ell}^{(\varepsilon)}$ between two expressions of the form $\alpha+ m^{1/s}\sum_{\ell = N}^{k}\rho_\ell^{(\varepsilon)}$ for some constant $\alpha>0$. For the first inequality, observe that 
\begin{align*}
\sum_{\ell = N}^{mk} \rho_{\ell}^{(\varepsilon)} &\leq \rho_N + \int_{N}^{mk} \Bigg[\frac{1}{f^{-1}(1/x)}\Bigg]' dx\\
&= \rho_N + \bigg(\frac{1}{f^{-1}(1/mk)}-\frac{1}{f^{-1}(1/N)}\bigg)\\
&\leq \bigg(\rho_N +\varepsilon k{\rho_k}-\frac{1}{f^{-1}(1/N)} \bigg)+ \frac{m^{1/s}}{f^{-1}(1/k)}\\
&= \bigg(\rho_N+\varepsilon k{\rho_k} -\frac{1-m^{1/s}}{f^{-1}(1/N)} \bigg)+ m^{1/s}\int_{N}^{k}\Bigg[\frac{1}{f^{-1}(1/x)}\Bigg]' dx\\
&\leq \bigg(\rho_N+\varepsilon k{\rho_k} -\frac{1-m^{1/s}}{f^{-1}(1/N)} \bigg)+ m^{1/s}\sum_{\ell = N}^{k}\rho_\ell^{(\varepsilon)}.
\end{align*}
Likewise,
\begin{align*}
\sum_{\ell = N}^{mk} \rho_{\ell}^{(\varepsilon)} &\geq \int_{N}^{mk} \Bigg[\frac{1}{f^{-1}(1/x)}\Bigg]' dx\\
&= \frac{1}{f^{-1}(1/mk)}-\frac{1}{f^{-1}(1/N)}\\
&\geq - \bigg(\varepsilon k{\rho_k}+\frac{1}{f^{-1}(1/N)} \bigg)+ \frac{m^{1/s}}{f^{-1}(1/k)}\\
&= - \bigg(\varepsilon k{\rho_k} +\frac{1-m^{1/s}}{f^{-1}(1/N)} \bigg)+ m^{1/s}\int_{N}^{k}\Bigg[\frac{1}{f^{-1}(1/x)}\Bigg]' dx\\
&\geq \bigg(\rho_N-\varepsilon k{\rho_k} -\frac{1-m^{1/s}}{f^{-1}(1/N)} \bigg)+ m^{1/s}\sum_{\ell = N}^{k}\rho_\ell^{(\varepsilon)}.
\end{align*}
With the notation
\[D_{\pm}(k):=\rho_N\pm \varepsilon k{\rho_k} -\frac{1-m^{1/s}}{f^{-1}(1/N)},\]
we obtain
\[\sum_{k=1}^\infty \big(s_k^{(j)} - s_{k+1}^{(j)}\big)\bigg(D_{-}(k)+ m^{1/s}\sum_{\ell = N}^{k}\rho_\ell^{(\varepsilon)}\bigg)\leq\sum_{k=1}^\infty \big(s_k^{(j)} - s_{k+1}^{(j)}\big) \sum_{\ell = N}^{mk} \rho_{\ell}^{(\varepsilon)}\]
\[\leq\sum_{k=1}^\infty \big(s_k^{(j)} - s_{k+1}^{(j)}\big)\bigg(D_{+}(k)+ m^{1/s}\sum_{\ell = N}^{k}\rho_\ell^{(\varepsilon)}\bigg).\]\\
Since $\lim_{j\rightarrow \infty}|\sum_{k=1}^\infty \big(s_k^{(j)} - s_{k+1}^{(j)}\big)D_{\pm}(k)| \leq \varepsilon C$, we have
\[\lim_{j\rightarrow \infty}\bigg| \big( \sum_{k=1}^\infty \big(s_k^{(j)} - s_{k+1}^{(j)}\big) \sum_{\ell = N}^{mk} \rho_{\ell}^{(\varepsilon)} - m^{1/s}\sum_{k=1}^\infty \big(s_k^{(j)} - s_{k+1}^{(j)}\big)\sum_{\ell = N}^{k}\rho_\ell^{(\varepsilon)} \big)\bigg|\leq \varepsilon C.\]\\
Finally,
\[m^{1/s}\sum_{k=1}^\infty \big(s_k^{(j)} - s_{k+1}^{(j)}\big)\sum_{\ell = N}^{k}\rho_\ell^{(\varepsilon)} = m^{1/s}\sum_{k=1}^{\infty}s_k\rho_k = m^{1/s}|X_j|_{\rho^{(\varepsilon)}},\]
the lemma follows.
\end{proof}

\begin{corollary}

Let $f,\varepsilon,\rho^{(\varepsilon)}$ be defined as above. If $X_j$ is a sequence of $n$-tuples of operators such that $|X_j|_{\rho^{(\varepsilon)}}<C<+\infty$ and $||X_j||\rightarrow 0$ then
\[\lim_{j \rightarrow \infty} \bigg||X_j \tensor I_m |_{\rho^{(\varepsilon)}} - m^{1/s}|X_j|_{\rho^{(\varepsilon)}}\bigg| \leq \varepsilon C.\]
\end{corollary}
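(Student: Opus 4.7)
The plan is to reduce the $n$-tuple statement to the scalar case already handled in Lemma 4.1 by exploiting the definition $|\tau|_{\Phi} := \max_k |T_k|_{\Phi}$ for an $n$-tuple $\tau = (T_1,\dots,T_n)$, together with the corresponding definition $\|\tau\| := \max_k \|T_k\|$. Write each $X_j = (X_j^{(1)}, \dots, X_j^{(n)})$. The hypotheses $\|X_j\| \to 0$ and $|X_j|_{\rho^{(\varepsilon)}} < C$ immediately give $\|X_j^{(k)}\| \to 0$ and $|X_j^{(k)}|_{\rho^{(\varepsilon)}} < C$ for each $k = 1, \dots, n$, so Lemma 4.1 applies component-wise.

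First I would apply Lemma 4.1 to each of the $n$ scalar sequences $\{X_j^{(k)}\}_{j=1}^\infty$, obtaining
\[
\limsup_{j \to \infty} \bigl||X_j^{(k)} \tensor I_m|_{\rho^{(\varepsilon)}} - m^{1/s}|X_j^{(k)}|_{\rho^{(\varepsilon)}}\bigr| \leq \varepsilon C
\]
for every $k$. Next I would invoke the elementary fact that for any real numbers $a_1, \dots, a_n, b_1, \dots, b_n$,
\[
\bigl|\max_k a_k - \max_k b_k\bigr| \leq \max_k |a_k - b_k|,
\]
applied to $a_k = |X_j^{(k)} \tensor I_m|_{\rho^{(\varepsilon)}}$ and $b_k = m^{1/s}|X_j^{(k)}|_{\rho^{(\varepsilon)}}$. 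Since $(X_j \tensor I_m)$ denotes the $n$-tuple $(X_j^{(1)} \tensor I_m, \dots, X_j^{(n)} \tensor I_m)$, the maxes of these quantities over $k$ are exactly $|X_j \tensor I_m|_{\rho^{(\varepsilon)}}$ and $m^{1/s}|X_j|_{\rho^{(\varepsilon)}}$ respectively.

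Finally I would take $\limsup_{j \to \infty}$ and use that the $\limsup$ of a maximum over finitely many sequences equals the maximum of the $\limsup$s (finiteness of the max over $k = 1, \dots, n$ is essential here, and is the only place where $n$-tuple structure matters beyond component-wise reduction) to conclude
\[
\limsup_{j \to \infty} \bigl||X_j \tensor I_m|_{\rho^{(\varepsilon)}} - m^{1/s}|X_j|_{\rho^{(\varepsilon)}}\bigr| \leq \max_k (\varepsilon C) = \varepsilon C,
\]
which is the desired bound. There is no real obstacle here — the corollary is essentially a bookkeeping extension, and the only thing to check is that the operations of tensoring with $I_m$ and passing to the $\rho^{(\varepsilon)}$-norm are compatible with the coordinate-wise maximum defining the $n$-tuple norm, which is immediate from the definitions in Section 2.1.
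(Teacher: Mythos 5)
Your proof is correct and matches the paper's (implicit) approach: the paper gives no explicit proof of Corollary 4.2 precisely because the intended argument is the component-wise reduction to Lemma 4.1 that you carry out, using $|\tau|_\Phi = \max_k|T_k|_\Phi$ and $\|\tau\| = \max_k\|T_k\|$ together with the elementary inequality for maxima. The only thing I would flag is that the paper's $\lim_{j\to\infty}$ should be read as $\limsup$ (as you correctly do), and that your appeal to ``$\limsup$ of a max equals max of $\limsup$s'' is slightly more than needed --- the one-sided inequality $\limsup_j \max_k c_k^{(j)} \le \max_k \limsup_j c_k^{(j)}$ for finitely many $k$ suffices and is immediate.
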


The next lemma shows that, even though the ampliation result from the previous lemma only applies to norms of the form $|\cdot|_{\rho^{(\varepsilon)}}$, it can still be used to calculate $k_\rho(\tau)$. Let $S\pi$ denote the left shift of a sequence $\pi$, that is, $S\pi=\{\xi_{k}\}_{k=1}^{\infty}$ with $\xi_k=\pi_{k+1}$.

\begin{lemma}

 Let $\tau$ be an n-tuple of commuting self-adjoint operators, and let $\pi$ be a regular sequence. Then $k_{\pi}(\tau)=k_{S\pi}{(\tau)}$.

\end{lemma}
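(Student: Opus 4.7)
The plan is to prove both inequalities $k_{S\pi}(\tau) \leq k_\pi(\tau)$ and $k_\pi(\tau) \leq k_{S\pi}(\tau)$. The first is essentially automatic: since $\pi$ is non-increasing, $\Phi_{S\pi}(\xi) \leq \Phi_\pi(\xi)$ for every $\xi \in c_{00}$, hence $|[A,\tau]|_{S\pi} \leq |[A,\tau]|_\pi$ for every $A\in\mathcal R_1^+$, and taking $\liminf$ over the net gives the inequality.

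For the reverse direction I would first record the following elementary norm comparison on finite-rank operators. If $F$ is finite rank with singular values $s_1 \geq s_2 \geq \cdots$, then
\[|F|_\pi = \pi_1 s_1 + \sum_{\ell\geq 2}\pi_\ell s_\ell = \pi_1\|F\| + \sum_{m\geq 1}\pi_{m+1}s_{m+1} \leq \pi_1 \|F\| + |F|_{S\pi},\]
where in the last step I used $s_{m+1}\leq s_m$ so that $\sum_{m\geq 1}\pi_{m+1}s_{m+1} \leq \sum_{m\geq 1}\pi_{m+1}s_m = |F|_{S\pi}$.

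Next, I would apply Lemma 2.6 to the symmetric norming function $\Phi_{S\pi}$ to produce an increasing sequence $(A_j) \subset \mathcal R_1^+$ converging weakly to $I$ with $|[A_j,\tau]|_{S\pi} \to k_{S\pi}(\tau)$ and $\|[A_j,\tau]\| \to 0$. Since each $A_j$ has finite rank, each commutator $[A_j,T_k]$ is finite rank, so the comparison above applies componentwise; taking the maximum over $k=1,\dots,n$ yields
\[|[A_j,\tau]|_\pi \leq \pi_1\|[A_j,\tau]\| + |[A_j,\tau]|_{S\pi}.\]
Letting $j\to\infty$, the first term on the right vanishes and the second tends to $k_{S\pi}(\tau)$, whence $\liminf_j |[A_j,\tau]|_\pi \leq k_{S\pi}(\tau)$. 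By Lemma 2.5 applied to $\Phi_\pi$ along this sequence, $k_\pi(\tau) \leq \liminf_j |[A_j,\tau]|_\pi \leq k_{S\pi}(\tau)$, completing the proof.

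The only nontrivial ingredient is the inequality $|F|_\pi \leq \pi_1\|F\| + |F|_{S\pi}$, which is essentially a telescoping identity plus monotonicity of the singular value sequence; I anticipate no real obstacle. It is worth noting that the argument does not actually require regularity of $\pi$ — it uses only that $\pi$ is non-increasing with $\pi_1<+\infty$ — though the regularity hypothesis is consistent with how the lemma will be applied in the sequel.
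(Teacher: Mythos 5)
Your proof is correct and follows essentially the same route as the paper: pull off the $\pi_1 s_1 = \pi_1\|F\|$ term, bound the tail by $|F|_{S\pi}$ using monotonicity of singular values, and evaluate along a sequence from Lemma 2.6 with $\|[A_j,\tau]\|\to 0$. Your observation that the regularity hypothesis on $\pi$ is not actually used is also correct.
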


\begin{proof}

Begin by noting that for any operator $T$ with $|T|_\pi< +\infty$, we have $|T|_\pi\geq |T|_{S\pi}$, so clearly $k_\pi(\tau)\geq k_{S\pi}(\tau)$. For the other inequality, let $A_j$ be a sequence of positive finite rank contractions so that $\lim_j |[A_j, \tau]|_{S\pi} = k_{S\pi}(\tau) \text{ and} \lim_j ||[A_j, \tau]|| = 0.$ If $s_k^{j}$ are the singular values of $[A_j, \tau]$ listed in nonincreasing order, then
\[k_\pi(\tau) \leq \lim_j |[A_j, \tau]|_{\pi} = \lim_j \sum_{k=1}^\infty s_k^{(j)}\pi_k = \lim_j \bigg(s_1^{(j)}\pi_1+\sum_{k=2}^\infty s_k^{(j)}\pi_k \bigg)\]
\[\leq \lim_j \bigg(s_1^{(j)}\pi_1+\sum_{k=1}^\infty s_k^{(j)}\pi_{k+1} \bigg)= \lim_j||[A_j,\tau]||\pi_1 + \lim_j \sum_{k=1}^\infty s_k^{(j)}\pi_{k+1} =k_{S\pi}(\tau).\]
Thus $k_{\pi}(\tau)=k_{S\pi}(\tau)$, as desired.
\end{proof}

Since $\rho^{(\varepsilon)}$ is the sequence $\rho$ left-shifted a finite number of times, we deduce the following result.

\begin{corollary}

Let $\tau$ be an n-tuple of operators such that $0<k_\rho(\tau)<+\infty$. Then $k_{\rho}(\tau)=k_{\rho^{(\varepsilon)}}(\tau)$.

\end{corollary}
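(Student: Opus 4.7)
The plan is to reduce Corollary 4.4 directly to iterated application of Lemma 4.3. Recall that $\rho$ is defined in Corollary 3.3 as $\{h'(k)\}_{k=N}^\infty$ for some integer $N$ large enough to ensure monotonicity, while $\rho^{(\varepsilon)}$ is defined in Lemma 4.1 as $\{h'(k)\}_{k=N'}^\infty$ where $N' \geq N$ is chosen to satisfy the additional $\varepsilon$-estimate. Thus $\rho^{(\varepsilon)} = S^{M}\rho$ for $M = N' - N \geq 0$, and it suffices to invoke Lemma 4.3 exactly $M$ times, provided we verify that each intermediate sequence $\rho, S\rho, \dots, S^{M-1}\rho$ is regular.

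The first step I would carry out is to establish the regularity of $\rho$ itself. From the proof of Theorem 3.6 we already know that $m\rho_m f^{-1}(1/m) \to 1$ as $m\to\infty$, and moreover
\[
f^{-1}(1/m)\sum_{k=N}^{m}\rho_k = f^{-1}(1/m)\Bigl[\tfrac{1}{f^{-1}(1/x)}\Bigr]_{N}^{m} \longrightarrow 1.
\]
Dividing these two asymptotics shows that $\sum_{k=1}^{m}\rho_k / (m\rho_m)$ is bounded as $m\to\infty$, and the finitely many early terms are handled by enlarging the constant. Hence $\rho$ is regular.

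Next I would check that regularity is preserved under the left shift $S$. If $\pi$ is regular with constant $\alpha$, then for $\pi' = S\pi$,
\[
\sum_{k=1}^{m}\pi'_k = \sum_{k=2}^{m+1}\pi_k \leq \alpha(m+1)\pi_{m+1} = \alpha(m+1)\pi'_m \leq 2\alpha\, m\, \pi'_m,
\]
so $S\pi$ is regular with constant $2\alpha$. Iterating, each $S^j\rho$ ($0 \leq j \leq M-1$) is regular, so Lemma 4.3 applies successively to give
\[
k_\rho(\tau) = k_{S\rho}(\tau) = k_{S^2\rho}(\tau) = \cdots = k_{S^M\rho}(\tau) = k_{\rho^{(\varepsilon)}}(\tau),
\]
which is the desired conclusion. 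The hypothesis $0 < k_\rho(\tau) < +\infty$ is only used implicitly (via Lemma 4.3) to guarantee the existence of the approximating sequences $A_j$ with $\|[A_j,\tau]\|\to 0$ in the context of commuting self-adjoint operators.

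The only real subtlety is the verification of regularity, and that amounts to combining two limits that are already in hand from the proof of Theorem 3.6; after that, the corollary is an immediate finite induction. I do not anticipate any substantial obstacle.
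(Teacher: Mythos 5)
Your proof is correct and takes essentially the same approach the paper intends: the paper dismisses the corollary with the single remark that $\rho^{(\varepsilon)}$ is $\rho$ left-shifted finitely many times, implicitly invoking Lemma 4.3 iteratively. You have simply filled in the details the paper leaves unstated, in particular the (correct) verification that $\rho$ is regular and that regularity survives the left shift.
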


We now state and prove an extension of the homogeneity result in \cite[Theorem 3.1]{voiculescu2021formula}.

\begin{theorem} 

Let $f:[0,\infty)\rightarrow[0,\infty)$ be a function with property (R$_{s}$) for some $s\geq 1$. Then

\[k_\rho(\tau_{\Omega,H_f}\tensor I_m) = m^{1/s}k_{\rho}(\tau_{\Omega,H_f}).\]

\end{theorem}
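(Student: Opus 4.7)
The plan is to establish the two inequalities $k_\rho(\tau_{\Omega,H_f} \tensor I_m) \leq m^{1/s}k_\rho(\tau_{\Omega,H_f})$ and its reverse separately, each by combining Corollary~4.2 (near-homogeneity of $|\cdot|_{\rho^{(\varepsilon)}}$ under ampliation, up to an $\varepsilon C$ error) with Corollary~4.4 (which identifies $k_\rho(\cdot) = k_{\rho^{(\varepsilon)}}(\cdot)$). As a preliminary, verify $0 < k_\rho(\tau_{\Omega,H_f} \tensor I_m) < +\infty$: positivity follows from Lemma~2.6 and Theorem~3.6 since $\tau_{\Omega,H_f}$ is a direct summand of $\tau_{\Omega,H_f} \tensor I_m$, and finiteness by imitating the proof of Lemma~3.1 with the ampliated projections $P_k \tensor I_m$ on $\Hilb^m$.

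For the upper bound, use Lemma~2.5 to pick $A_j \in \mathcal{R}_1^+$ on $\Hilb$ with $A_j \to I_\Hilb$ weakly, $\|[A_j, \tau_{\Omega,H_f}]\| \to 0$, and $|[A_j, \tau_{\Omega,H_f}]|_\rho \to k_\rho(\tau_{\Omega,H_f})$. Because $\rho^{(\varepsilon)}$ is a finite left-shift of $\rho$, a short argument using Lemma~2.4 together with iterations of Lemma~4.3 gives $|[A_j, \tau_{\Omega,H_f}]|_{\rho^{(\varepsilon)}} \to k_\rho(\tau_{\Omega,H_f})$. The ampliations $A_j \tensor I_m$ lie in $\mathcal{R}_1^+$ on $\Hilb^m$, converge weakly to $I_{\Hilb^m}$, and satisfy $[A_j \tensor I_m, \tau_{\Omega,H_f} \tensor I_m] = [A_j, \tau_{\Omega,H_f}] \tensor I_m$. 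Corollary~4.2 applied to $X_j = [A_j, \tau_{\Omega,H_f}]$ yields
\[
\liminf_j |[A_j, \tau_{\Omega,H_f}] \tensor I_m|_{\rho^{(\varepsilon)}} \leq m^{1/s} k_\rho(\tau_{\Omega,H_f}) + \varepsilon C,
\]
so Lemma~2.4 (applied to $\tau_{\Omega,H_f}\tensor I_m$) and Corollary~4.4 give $k_\rho(\tau_{\Omega,H_f}\tensor I_m) \leq m^{1/s} k_\rho(\tau_{\Omega,H_f}) + \varepsilon C$. Letting $\varepsilon \to 0$ closes this direction.

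The reverse inequality is the main obstacle: one must convert an optimal sequence for $\tau_{\Omega,H_f} \tensor I_m$ into one for $\tau_{\Omega,H_f}$. Apply Lemma~2.5 to get $B_j \in \mathcal{R}_1^+$ on $\Hilb^m$ realizing $k_{\rho^{(\varepsilon)}}(\tau_{\Omega,H_f}\tensor I_m) = k_\rho(\tau_{\Omega,H_f}\tensor I_m)$ with $\|[B_j, \tau_{\Omega,H_f}\tensor I_m]\| \to 0$. Define $A_j := (1/m)\operatorname{tr}_m(B_j)$, the partial trace over the $m$-dimensional factor scaled by $1/m$; then $A_j$ is a positive finite-rank contraction on $\Hilb$, converges weakly to $I_\Hilb$, and $\|[A_j, \tau_{\Omega,H_f}]\| \to 0$. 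The key tool is the conditional expectation
\[
\Phi(X) := \frac{1}{m}\operatorname{tr}_m(X) \tensor I_m = \int_{U(m)} (I_\Hilb \tensor V)\, X\, (I_\Hilb \tensor V^*)\, dV,
\]
which, as a Haar average of unitary conjugations, is $|\cdot|_{\rho^{(\varepsilon)}}$-contractive (Corollary~3.3 ensures $\Phi_{\rho^{(\varepsilon)}}$ is a symmetric norming function, whence the ideal norm is unitarily invariant). Since $[A_j,\tau_{\Omega,H_f}]\tensor I_m = \Phi([B_j,\tau_{\Omega,H_f}\tensor I_m])$ (partial trace commutes with commutators against operators of the form $T\tensor I_m$), this gives $|[A_j, \tau_{\Omega,H_f}] \tensor I_m|_{\rho^{(\varepsilon)}} \leq |[B_j, \tau_{\Omega,H_f} \tensor I_m]|_{\rho^{(\varepsilon)}}$. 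Now apply Corollary~4.2 in reverse to $X_j = [A_j, \tau_{\Omega,H_f}]$, whose $|\cdot|_{\rho^{(\varepsilon)}}$-norms are uniformly bounded thanks to the regularity of $\rho$, to obtain $m^{1/s}|[A_j, \tau_{\Omega,H_f}]|_{\rho^{(\varepsilon)}} \leq |[B_j, \tau_{\Omega,H_f}\tensor I_m]|_{\rho^{(\varepsilon)}} + \varepsilon C + o(1)$. Taking $\liminf_j$, using $k_\rho(\tau_{\Omega,H_f}) \leq \liminf_j |[A_j, \tau_{\Omega,H_f}]|_{\rho^{(\varepsilon)}}$, and applying Corollary~4.4 yields $m^{1/s} k_\rho(\tau_{\Omega,H_f}) \leq k_\rho(\tau_{\Omega,H_f}\tensor I_m) + \varepsilon C$; letting $\varepsilon \to 0$ concludes the proof.
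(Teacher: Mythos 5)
Your proof is correct and follows the same two-inequality strategy as the paper (split into $\leq$ and $\geq$, convert $k_\rho$ to $k_{\rho^{(\varepsilon)}}$ via Corollary~4.4, and use Corollary~4.2 for the approximate homogeneity). The real difference is in how you handle the reverse inequality: where the paper invokes Lemma~3.3 of \cite{voiculescu2021formula} to produce a quasicentral approximate unit for $\tau_{\Omega,H_f}\tensor I_m$ that is already of the $U(m)$-invariant form $A_j\tensor I_m$, you re-derive this reduction explicitly via the partial trace. Your identification of $(1/m)\operatorname{tr}_m(\cdot)\tensor I_m$ as the Haar average $\int_{U(m)}(I\tensor V)\,\cdot\,(I\tensor V^*)\,dV$, the observation that this conditional expectation intertwines commutators with $T\tensor I_m$, and the $|\cdot|_{\rho^{(\varepsilon)}}$-contractivity from unitary invariance of symmetrically normed ideals, are precisely the content one needs from the cited lemma; so you have unpacked the black box rather than taken a genuinely different route. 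This makes your write-up self-contained, at the cost of a slightly longer argument. A few minor points worth tightening: in the upper-bound step you can simply apply Lemma~2.5 directly with $\Phi_{\rho^{(\varepsilon)}}$ rather than optimizing for $\rho$ and then transferring through the shift; and in the lower bound the uniform boundedness of $|[A_j,\tau_{\Omega,H_f}]|_{\rho^{(\varepsilon)}}$ follows more directly from $|[A_j,\tau_{\Omega,H_f}]|_{\rho^{(\varepsilon)}} \leq |[A_j,\tau_{\Omega,H_f}]\tensor I_m|_{\rho^{(\varepsilon)}} \leq |[B_j,\tau_{\Omega,H_f}\tensor I_m]|_{\rho^{(\varepsilon)}}$, which converges, than from regularity of $\rho$. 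Also note the unfortunate notational clash: $\Phi$ already denotes symmetric norming functions throughout the paper, so a different symbol for the conditional expectation would be advisable.
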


\begin{proof}

Fix $\varepsilon > 0$. By Lemma 3.3 of \cite{voiculescu2021formula}, we can choose a sequence $A_j \subset R_1^+$ such that $A_j\uparrow I$ with 

\[k_{\rho^{(\varepsilon)}}(\tau_{\Omega,H_f}\tensor I_m) = \lim_{j\rightarrow\infty} |[A_j,\tau_{\Omega,H_f}]\tensor I_m|_{\rho^{(\varepsilon)}}\]
and
\[\lim_{j\rightarrow\infty} ||[A_j,\tau_{\Omega,H_f}]||=0.\]
Corollary 4.2 implies that 
\[k_{\rho^{(\varepsilon)}}(\tau_{\Omega,H_f}\tensor I_m)\leq\lim_{j\rightarrow\infty} (1+\varepsilon)m^{1/s}|[A_j,\tau_{\Omega, H_f}]|_{\rho^{(\varepsilon)}},\]
and thus
\[k_\rho(\tau_{\Omega,H_f}\tensor I_m) = k_{\rho^{(\varepsilon)}}(\tau_{\Omega,H_f}\tensor I_m)\leq (1+\varepsilon) m^{1/s}k_{\rho^{(\varepsilon)}}(\tau_{\Omega,H_f}) = (1+\varepsilon)m^{1/s}k_{\rho}(\tau_{\Omega,H_f}). \]
Next, choose a sequence $B_j \subset R_1^+$ such that $B_j\uparrow I$ with 
\[k_{\rho^{(\varepsilon)}}(\tau_{\Omega,H_f}) = \lim_{j\rightarrow\infty} |[B_j,\tau_{\Omega,H_f}]|_{\rho^{(\varepsilon)}},\]
\noindent and
\[\lim_{j\rightarrow\infty} ||[B_j,\tau_{\Omega,H_f}]||=0.\]
Using Corollary 4.2 once more, we see that
\[m^{1/s}k_{\rho^{(\varepsilon)}}(\tau_{\Omega,H_f})\leq(1+\varepsilon)\lim_{j\rightarrow\infty} |[B_j,\tau_{\Omega,H_f}]\tensor I_m|_{\rho^{(\varepsilon)}}.\]
Thus
\[m^{1/s}k_\rho(\tau_{\Omega,H_f}) = m^{1/s}k_{\rho^{(\varepsilon)}}(\tau_{\Omega,H_f})\leq (1+\varepsilon) k_{\rho^{(\varepsilon)}}(\tau_{\Omega,H_f} \tensor I_m) = (1+\varepsilon)k_{\rho}(\tau_{\Omega,H_f}\tensor I_m). \]

Since $\varepsilon$ is arbitrarily small, the result follows.
\end{proof}
    
\section{Exact Formula for the Quasicentral Modulus for Generalized Hausdorff Measures}

For this section we once again assume $f:[0,\infty)\rightarrow[0,\infty)$ a logarithmically-concave gauge function with property $(R_s)$ for some $s\geq1$. Let $n=\lfloor s\rfloor$ +1. If $\tau$ is an $n$-tuple of commuting self-adjoint operators with $\sigma(\tau)\subset C_f$, and $m$ is the multiplicity function for $\tau$, we show that 
${(k_\rho(\tau))}^s$ is proportional to $\int_{\sigma(\tau)} m(x) dH_f(x)$. To begin, we first show that $k_\rho(\tau) = 0$ when ${\mathcal{H}^{f}_{ac}} = \{0\}$.

\begin{lemma}

If $f$ is a logarithmically-concave gauge function with property $(R_s)$, $\Omega\subset C_f$ is a Borel set, and $\tau_{\Omega,\nu}$ is multiplication by the variables on $L^2(\Omega,\nu)$ for some Borel probability measure $\nu$, then there exists a constant $C>0$ such that
\[k_\rho(\tau_{\Omega,H_f}) \leq C (H_{f}(\Omega))^{1/s}.\]
\end{lemma}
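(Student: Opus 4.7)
The plan is to produce a sequence of finite-rank projections $P_L$ converging strongly to the identity on an ambient Hilbert space containing $L^2(\Omega, H_f)$ as a reducing subspace, where the projections are indexed by the generation-$L$ partition of $C_f$ from Section 2.3. Bounding the commutators $|[P_L, \tau]|_\rho$ and then passing to the subspace will deliver the claim by way of Lemma 2.4 and Lemma 2.6.

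First I would use outer regularity of $H_f$ to replace $\Omega$ by a set well-adapted to the Cantor structure. For a given $\varepsilon>0$, choose an open $U \supset \Omega$ with $H_f(U) \leq H_f(\Omega)+\varepsilon$, and set $\Omega' := U \cap C_f$. For each $L$ let $W_L := \{w : |w|=L,\ C_f^w \subset U\}$ and $\Omega_L := \bigcup_{w \in W_L} C_f^w$. Since the diameter of every generation-$L$ piece is at most $\sqrt{n}\,\lambda_L \to 0$, every point of $\Omega'$ (off the $H_f$-null set of Cantor corner points) will eventually lie in some $C_f^w \subset U$, so $\Omega_L \nearrow \Omega'$ up to $H_f$-null sets and $H_f(\Omega_L) \leq H_f(\Omega)+\varepsilon$ for every $L$. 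Combining this with the Hatano lower bound $H_f(C_f^w) \geq \xi\cdot 2^{-nL}$ yields
\[|W_L| \leq \frac{H_f(\Omega_L)}{\xi\cdot 2^{-nL}} \leq \frac{H_f(\Omega)+\varepsilon}{\xi}\cdot 2^{nL}.\]

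Next I would define $P_L$ on $L^2(\Omega', H_f)$ as the orthogonal projection onto $\mathrm{span}\{\chi_{C_f^w}/\|\chi_{C_f^w}\|_2 : w \in W_L\}$ and estimate the commutator exactly as in Lemma 3.1: $[P_L, \tau_{\Omega', H_f}]$ has rank at most $2|W_L|$ and operator norm at most $2\sqrt{n}\,\lambda_L$. Using the integral bound $\sum_{k=1}^M \rho_k \leq h(M) = 1/f^{-1}(1/M)$ (valid for large $M$ since $h'$ is eventually decreasing by Lemma 3.2), this gives
\[|[P_L, \tau_{\Omega', H_f}]|_\rho \leq 2\sqrt{n}\,\frac{f^{-1}(1/2^{nL})}{f^{-1}\bigl(1/(2|W_L|)\bigr)} + o(1).\]
Substituting the bound on $|W_L|$ and applying Lemma 2.3 with $a = \xi/(2(H_f(\Omega)+\varepsilon))$, the displayed ratio tends to $(2(H_f(\Omega)+\varepsilon)/\xi)^{1/s}$ as $L\to\infty$, so $\limsup_L|[P_L,\tau_{\Omega',H_f}]|_\rho \leq 2\sqrt{n}(2/\xi)^{1/s}(H_f(\Omega)+\varepsilon)^{1/s}$.

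To finish, I would verify that $P_L \to I$ strongly on $L^2(\Omega', H_f)$: continuous functions are nearly constant on the shrinking pieces $C_f^w$ and $\Omega_L \nearrow \Omega'$, so the convergence holds on a dense subspace and extends by boundedness. Lemma 2.4 then gives $k_\rho(\tau_{\Omega', H_f}) \leq 2\sqrt{n}(2/\xi)^{1/s}(H_f(\Omega)+\varepsilon)^{1/s}$. Because multiplication by $\chi_\Omega$ commutes with each $T_j$, $L^2(\Omega, H_f)$ is a reducing subspace of $L^2(\Omega', H_f)$ and Lemma 2.6(1) yields $k_\rho(\tau_{\Omega, H_f}) \leq k_\rho(\tau_{\Omega', H_f})$; letting $\varepsilon \to 0$ proves the lemma with $C = 2\sqrt{n}(2/\xi)^{1/s}$. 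The main obstacle is ensuring that $|W_L|$ scales with $H_f(\Omega)$ rather than the ambient $H_f(C_f)$, which is exactly what the outer-regularity detour through $\Omega'$ buys combined with the Hatano lower bound; the correct exponent $1/s$ then falls out of a clean application of the regularly-varying asymptotics in Lemma 2.3.
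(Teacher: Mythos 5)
Your argument follows the paper's proof of Lemma 5.1 essentially step for step: project onto the span of the characteristic functions of generation-$L$ pieces, bound the commutator by the piece diameter and the rank by the cardinality of the index set, and then use the Hatano lower bound together with the regular-variation asymptotics of Lemma 2.3 to extract the exponent $1/s$. The one genuine difference is your outer-regularity detour through an open $U \supset \Omega$ with $H_f(U) \leq H_f(\Omega) + \varepsilon$ and the inner approximation $\Omega_L = \bigcup\{C_f^w : C_f^w \subset U\}$; the paper instead takes $W_L = \{w : C_f^w \cap \Omega \neq \varnothing\}$ and asserts (after what appears to be a reversed inequality) that the resulting $U_L$ has measure close to $H_f(\Omega)$, which is only clear when $H_f(\overline{\Omega}) = H_f(\Omega)$. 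Your version handles arbitrary Borel $\Omega$ cleanly at no extra cost, so it is a genuine (small) improvement in rigor while otherwise being the same proof.
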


\begin{proof}

This argument mirrors that found in Lemma 5.1 of \cite{voiculescu2021formula}. Let $W_L=\{w : |w| = L \text{ and } C_f^w \cap \Omega \neq \varnothing \}$ and $U_L = \bigcup_{w\in W_L}C_f^w$. For any $\varepsilon>0$, choose an $L_0$ sufficiently large so that $H_f(\Omega) \geq H_f(U_{L_0})+\varepsilon$. Let 
\[P_Lv=\sum_{w\in W_L} \langle v, \chi_{C_f^w} \rangle \frac{\chi_{C_f^w}}{||\chi_{C_f^w}||_2^2}, v\in L^2(\Omega, H_f).\]
If $P_w = \langle \cdot, \chi_{C_f^w} \rangle \frac{\chi_{C_f^w}}{||\chi_{C_f^w}||_2^2}$, observe that for $L>L_0$
\[||[P_L,\tau_{\Omega,\nu}]|| = \max_{C_f^w}||[P_w,\tau_{\Omega,\nu}]||\leq\max_{C_f^w}2\text{diam}(C_f^w)\leq 2\sqrt{n} f^{-1}(1/2^{nL}).\]
Since $P_L\uparrow I$ as $L\rightarrow\infty$, we have
\[|[P_L,\tau_{\Omega,\nu}]|_\rho\leq ||[P_L,\tau_{\Omega,\nu}]|| \sum_{k=1}^{\text{rank}(P_L)}\rho_k\leq\frac{ 2\sqrt{n} f^{-1}(1/2^{nL})}{f^{-1}(1/|W_L|)}.\]
Since $H_f(U_L) = |W_L| H_f(C_f^w)$ for any $w$ with $|w|=L$, for $L$ sufficiently large we have
\begin{align*}
|[P_L,\tau_{\Omega,\nu}]|_\rho &\leq \frac{ 2\sqrt{n}f^{-1}(1/2^{nL})}{f^{-1}(H_f(C_f^w)/H_f(U_L))}\\
&\leq \frac{ 2\sqrt{n}f^{-1}(1/2^{nL})}{f^{-1}(H_f(C_f^w))/f^{-1}(H_f(\Omega))}+\varepsilon\\
&\leq (H_f(\Omega))^{1/s}\frac{ 2\sqrt{n}f^{-1}(1/2^{nL})}{f^{-1}(H_f(C_f^w))}+2\varepsilon\\
&\leq (H_f(\Omega))^{1/s}\frac{ 2\sqrt{n}\xi f^{-1}(1/2^{nL})}{f^{-1}(1/2^{nL})} + 2\varepsilon\\
&\leq C(H_f(\Omega))^{1/s} + 2\varepsilon
\end{align*}
Since $\varepsilon$ can be chosen arbitrarily small, we conclude
\[k_\rho(\tau_{\Omega,\nu}) \leq |[P_L,\tau_{\Omega,\nu}]|_\rho \leq C (H_{f}(\Omega))^{1/s}\qedhere\]
\end{proof}

\begin{corollary}

Let $f$ be a logarithmically-concave gauge function with property $(R_s)$, and let $\tau$ be an $n$-tuple of commuting self-adjoint operators with $\sigma(\tau)\subset \Omega$. If $E_{\tau}$ is singular with respect to $ H_f$, then $k_\rho(\tau)=0$.

\end{corollary}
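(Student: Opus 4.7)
The plan is to reduce to Lemma 5.1 by combining the spectral multiplicity theorem for commuting self-adjoint tuples with the key observation that Lemma 5.1 bounds $k_\rho(\tau_{\Omega,\nu})$ by the $H_f$-measure of $\Omega$ \emph{alone}, independently of the measure $\nu$ itself. First, fix a scalar-valued spectral measure $\mu$ for $\tau$ (a finite Borel measure on $\sigma(\tau)\subset C_f$ with the same null sets as $E_\tau$) together with its measurable multiplicity function $m:\sigma(\tau)\to\{1,2,\dots,\infty\}$, and set $X_k=\{x\in\sigma(\tau):m(x)\geq k\}$. The spectral theorem yields a unitary equivalence
\[
\tau \;\cong\; \bigoplus_{k=1}^\infty \tau_{X_k,\,\mu|_{X_k}},
\]
where terms with $\mu(X_k)=0$ are omitted. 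Because $\mu$ and $E_\tau$ share null sets, the hypothesis $E_\tau\perp H_f$ translates to $\mu\perp H_f$, and therefore each $\mu|_{X_k}$ is singular with respect to $H_f$ as well.

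Next I show that $k_\rho(\tau_{X_k,\mu|_{X_k}})=0$ for every $k$. Given $\varepsilon>0$, singularity of $\mu|_{X_k}$ produces a Borel set $S_k\subset X_k$ with $H_f(S_k)<\varepsilon$ and $\mu(X_k\setminus S_k)=0$. The identification $L^2(X_k,\mu|_{X_k})=L^2(S_k,\mu|_{S_k})$ is an isometric isomorphism under which multiplication by the coordinate functions is preserved, so $\tau_{X_k,\mu|_{X_k}}$ is unitarily equivalent to $\tau_{S_k,\mu|_{S_k}}$ (after normalizing $\mu|_{S_k}$ to a probability measure, which leaves the operator unchanged up to unitary equivalence). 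Since $S_k\subset C_f$, Lemma 5.1 applies and gives $k_\rho(\tau_{X_k,\mu|_{X_k}})\leq C\,H_f(S_k)^{1/s}<C\varepsilon^{1/s}$. Letting $\varepsilon\to 0$ yields $k_\rho(\tau_{X_k,\mu|_{X_k}})=0$.

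Finally, assemble the summands using Lemma 2.6: part (1) applied inductively gives $k_\rho\bigl(\bigoplus_{k=1}^N \tau_{X_k,\mu|_{X_k}}\bigr)\leq \sum_{k=1}^N k_\rho(\tau_{X_k,\mu|_{X_k}})=0$ for every finite $N$, and part (2) then produces
\[
k_\rho(\tau) \;=\; \lim_{N\to\infty} k_\rho\Bigl(\bigoplus_{k=1}^N \tau_{X_k,\mu|_{X_k}}\Bigr) \;=\; 0.
\]
The main obstacle is the possibility of infinite spectral multiplicity, which prevents one from applying Lemma 5.1 directly to $\tau$: Lemma 5.1 is a ``multiplicity-one'' bound, so the spectral-multiplicity decomposition together with Lemma 2.6(2) is essential for passing from the multiplicity-free pieces $\tau_{X_k,\mu|_{X_k}}$ to the full $\tau$. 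Once the decomposition is in place, singularity of $\mu$ combined with the $\nu$-independence of the bound in Lemma 5.1 forces each summand's quasicentral modulus to vanish.
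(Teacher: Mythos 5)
Your proof is correct and close in spirit to the paper's, but the two arguments differ in how they organize the reduction, and it is worth spelling that out. The paper opens its proof by simply declaring that one may assume $\tau$ has a cyclic vector; you make that reduction explicit via the Hahn–Hellinger decomposition $\tau\cong\bigoplus_k\tau_{X_k,\mu|_{X_k}}$ with $X_k=\{m\geq k\}$, which is a nice way to justify that step and exactly matches the role played by Lemma 2.6(1)–(2). Where the proofs genuinely diverge is in how each multiplicity-one piece is handled. The paper chooses, for each $m$, a set $\omega_m$ that is a \emph{finite union of generation sets} with $H_f(\omega_m)<\varepsilon$ and $\nu(\Omega\setminus\omega_m)<2^{-m}$; because $\nu(\Omega\setminus\omega_m)$ is only small rather than zero, it then has to pass from $\tau E_\tau(\omega_m)$ back to $\tau$ via the commutant trick (choosing $A\leq E_\tau(\omega_m)$ so that $[A,\tau]=[A,\tau E_\tau(\omega_m)]$). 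You sidestep that compression argument entirely by using the full strength of mutual singularity to get a set $S_k$ with $\mu(X_k\setminus S_k)=0$ \emph{exactly}, so that $L^2(X_k,\mu|_{X_k})=L^2(S_k,\mu|_{S_k})$ on the nose; in fact you can take $H_f(S_k)=0$, so the $\varepsilon\to 0$ limit is not even needed. This is cleaner, and correctly isolates the key point that the Lemma 5.1 bound depends only on $H_f(\Omega)$, not on the representing measure.

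One caveat worth flagging: your argument applies Lemma 5.1 to $S_k$, which is a general Borel (in fact $H_f$-null) subset of $C_f$. The stated Lemma 5.1 covers this, but its written proof is really tight only when $\Omega$ is a union of generation sets (or at least closed), since it relies on $H_f(U_L)\downarrow H_f(\Omega)$ where $U_L$ is the $L$-th generation cover of $\Omega$; for a dense $H_f$-null $\Omega$ the sets $U_L$ need not shrink to $\Omega$ in $H_f$-measure. The paper's insistence that $\omega_m$ be a union of generation sets, at the cost of only controlling $\nu(\Omega\setminus\omega_m)$ up to $2^{-m}$ and then invoking the compression trick, is precisely what sidesteps this subtlety. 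Your proof is valid if Lemma 5.1 is taken at face value; if one insists on the scope actually established in its proof, you would need either to replace $S_k$ by a union of generation sets $\omega$ with $\mu(X_k\setminus\omega)$ small and then reinstate the compression step, or to first upgrade Lemma 5.1 to general Borel $\Omega$ by an outer-regularity argument.
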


\begin{proof}
Observe that we may assume $\tau$ has a cyclic vector $\xi$ and hence $\tau$ is unitarily equivalent to some $\tau_{\Omega,\nu}$ defined on $L^2(\Omega,\nu)$ with $\Omega\subset C_f$ and $\nu$ a Borel probability measure singular with respect to $H_f$. 

Given $\varepsilon > 0$, we may choose a set $\omega_m\subset\Omega$ which is a union of generation sets of the same size so that $H_f(\omega_m) < \varepsilon$ and $\nu(\Omega\setminus\omega_m) < 2^{-m}$ for all $x\in \Hilb_{ac}$. By the previous lemma, we see that 
\[k_\pi(\tau E_\tau(\omega_m))<C \cdot (H_f(\omega_m))^{1/s} < C\cdot \varepsilon\]
for some constant $C>0$ dependent only on $f$. Since $E_\tau(\omega_m)$ is in the commutant $(\{\tau\})'$, and since $\xi$ is cyclic for $\tau$, the inequality $||\xi - E_\tau(\omega_m)\xi||^2 = \nu(\Omega\setminus\omega_m) \leq 2^{-m}$ implies that $E_\tau(\omega_m)$ converges strongly to $I$. Choosing $A_k\in \mathcal{R}_1^{+}$ so that $A_k\leq E_\tau(\omega_k)$, $A_k\uparrow I$, and $k_\rho(\tau E_\tau(\omega_n)) \geq \lim_{k\rightarrow\infty }|[A_k,\tau E_\tau(\omega_m)]|_\rho$ yields
\[k_\rho(\tau)\leq \lim_{k\rightarrow\infty }|[A_k,\tau ]|_\rho \leq k_\rho(\tau E_{\tau}(\omega_m))<C\cdot\varepsilon.\]
letting $\varepsilon$ tend to 0 yields the result.
\end{proof}

\begin{theorem}

Let $f, \tau$ be as above. If $m$ is the multiplicity function for $\tau$, then 
\[(k_\rho(\tau))^s = \kappa_s^{(f)} \int_{\sigma(\tau)} m(x)dH_f(x),\]
where $\kappa_s^{(f)} = (k_{\rho}(\tau_{C_f}))^s/H_{f}(C_f).$
    
\end{theorem}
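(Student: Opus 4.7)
The plan is to reduce the theorem to a multiplicity-one statement
\[k_\rho(\tau_{\Omega,H_f})^s = \kappa_s^{(f)} H_f(\Omega) \qquad (\star)\]
for arbitrary Borel $\Omega\subset C_f$, and then to assemble via the ampliation homogeneity of Theorem 4.5 applied to the spectral multiplicity decomposition of $\tau$.

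First I would reduce to the absolutely continuous case. Lemma 2.4 gives the reducing splitting $\tau=\tau_{ac}\oplus\tau_s$, Corollary 5.2 gives $k_\rho(\tau_s)=0$, and so by Lemma 2.7(1) we have $k_\rho(\tau)=k_\rho(\tau_{ac})$; simultaneously $\int m\,dH_f$ is unaffected because the singular-spectrum contribution to $m\,dH_f$ vanishes $H_f$-a.e. Standard spectral multiplicity theory realizes $\tau_{ac}$, up to unitary equivalence, as
\[\bigoplus_{j=1}^{\infty}\tau_{\Omega_j,H_f}\otimes I_j,\qquad \Omega_j=\{x:m(x)=j\},\]
and Theorem 4.5 applied to each summand gives $k_\rho(\tau_{\Omega_j,H_f}\otimes I_j)^s = j\cdot k_\rho(\tau_{\Omega_j,H_f})^s$.

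The crux is $(\star)$. I would first establish an $s$-additivity property for disjoint-spectrum direct sums: whenever $\Omega_1,\Omega_2\subset C_f$ are disjoint Borel sets with positive mutual distance,
\[k_\rho(\tau_{\Omega_1\cup\Omega_2,H_f})^s = k_\rho(\tau_{\Omega_1,H_f})^s + k_\rho(\tau_{\Omega_2,H_f})^s.\]
The upper bound is obtained by combining optimal finite-rank approximate units for each piece and exploiting the resulting block structure. The lower bound requires restricting a near-optimal approximate unit for the combined problem to each spectral subspace; here the specific weights $\rho_k=h'(k)$, which by the $(R_s)$ hypothesis satisfy $\rho_k\sim (1/s)k^{1/s-1}$, make $|\cdot|_\rho^s$ asymptotically additive under disjoint-range concatenation of singular value sequences. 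Applying this iteratively to the partition $C_f=\bigsqcup_{|w|=L}C_f^w$ into $2^{nL}$ translates of common $H_f$-measure (using Lemma 2.7(3) to absorb the translations), one obtains $k_\rho(\tau_{C_f^w,H_f})^s = 2^{-nL}k_\rho(\tau_{C_f,H_f})^s = \kappa_s^{(f)}H_f(C_f^w)$, hence $(\star)$ for every finite union of generation sets. An arbitrary Borel $\Omega\subset C_f$ is then handled by inner approximation: pick a finite union of generation sets $U_\varepsilon\subset\Omega$ with $H_f(\Omega\setminus U_\varepsilon)<\varepsilon$; the contribution of $\Omega\setminus U_\varepsilon$ to $k_\rho$ is $O(\varepsilon^{1/s})$ by Lemma 5.1, squeezing $k_\rho(\tau_{\Omega,H_f})^s$ to within $O(\varepsilon)$ of $\kappa_s^{(f)}H_f(\Omega)$.

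To conclude, combine $(\star)$ with the finite $s$-additivity and Lemma 2.7(2) to pass to the countable sum:
\[(k_\rho(\tau))^s = \lim_{N\to\infty}k_\rho\Bigl(\bigoplus_{j=1}^{N}\tau_{\Omega_j,H_f}\otimes I_j\Bigr)^{s} = \sum_{j=1}^{\infty}j\,\kappa_s^{(f)}H_f(\Omega_j) = \kappa_s^{(f)}\int_{\sigma(\tau)}m(x)\,dH_f(x).\]
The main obstacle I expect is the $s$-additivity: the upper-bound direction is routine, but the matching lower bound requires a careful analysis of how the weights $\rho_k$ interact with the interleaving of singular value sequences of the two commutator blocks. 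This parallels Voiculescu's argument in the self-similar setting of \cite{voiculescu2021formula}, but here the perturbation from the regularly varying gauge $f$ (rather than a power function) must be controlled throughout the estimate, which is precisely what Lemma 2.3 and the $(R_s)$ property make possible in the quantitative form used in Corollary 4.2.
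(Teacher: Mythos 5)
Your outer structure matches the paper's: reduce to the absolutely continuous part via Corollary 5.2, decompose by spectral multiplicity, establish a base case for generation sets of $C_f$, then approximate arbitrary Borel sets. But the core mechanism by which you compute $k_\rho(\tau_{C_f^w})^s$ — a separate $s$-additivity lemma
\[
k_\rho(\tau_{\Omega_1\cup\Omega_2,H_f})^s = k_\rho(\tau_{\Omega_1,H_f})^s + k_\rho(\tau_{\Omega_2,H_f})^s
\]
for disjoint Borel sets of positive distance — is not what the paper does, and as stated it is a genuine gap. The ``routine'' upper bound you invoke only gives $k_\rho(\tau_1\oplus\tau_2)\leq k_\rho(\tau_1)+k_\rho(\tau_2)$ by Lemma 2.7(1), which raised to the $s$-th power goes the wrong way when $s>1$; you in fact need $k_\rho(\tau_1\oplus\tau_2)\leq(k_\rho(\tau_1)^s+k_\rho(\tau_2)^s)^{1/s}$, which is strictly stronger. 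The heuristic you offer — $\rho_k\sim(1/s)k^{1/s-1}$ makes $|\cdot|_\rho^s$ asymptotically additive under concatenation of singular value sequences — also mis-states the asymptotic: since $f$ is regularly varying with index $s$ but not a power, $f^{-1}(1/x)=x^{-1/s}L(x)$ for a nontrivial slowly varying $L$, so $\rho_k=h'(k)$ carries a slowly varying correction. Controlling that correction is precisely the technical content of Lemma 4.1 and Corollary 4.2, and it cannot be elided from the additivity estimate. You flag the $s$-additivity as ``the main obstacle'' — correctly — but you do not close it.

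The paper avoids the obstacle entirely. Once the $\Omega_j$ are reduced (by approximation) to unions of generation sets all at the same level $L$, the direct sum $\bigoplus_{j=1}^N\tau_{\Omega_j}$ becomes, after translating each $\tau_{C_f^w}$ to a common $C_f^{w_0}$ — which is permitted because Lemma 2.7(3) makes $k_\Phi$ invariant under adding scalar $n$-tuples — exactly an ampliation $I_M\otimes\tau_{C_f^{w_0}}$ with $M=\sum_j|W_{L_j}|$. Then Theorem 4.5 alone gives $(k_\rho)^s=M(k_\rho(\tau_{C_f^{w_0}}))^s$, and the same theorem applied to $C_f=C_f^{w_0}\otimes\text{translates}$ identifies $k_\rho(\tau_{C_f^{w_0}})^s=2^{-nL}k_\rho(\tau_{C_f})^s$. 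So the self-similar combinatorics of the generalized Cantor set converts what you want to prove as an abstract additivity theorem into a unitary-equivalence observation plus the ampliation homogeneity already established. Your countable-sum step via Lemma 2.7(2) is also the reverse of the paper's usage: the paper uses it to reduce from countably many summands to finitely many, not to pass to the limit at the end. The approximation of arbitrary Borel sets by generation sets using Lemma 5.1 is essentially as in the paper and is fine.
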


\begin{proof}

Begin by noting that $0<\kappa_s^{(f)}<\infty$ by Theorem 3.6. By Corollary 5.2, we may assume $\Hilb = \Hilb_{ac}$. Thus $\tau$ is unitarily equivalent to $\bigoplus_{j=1}^\infty\tau_{\Omega_j}$ for some Borel sets $\Omega_j$. Furthermore, Lemma 2.7 shows that we can assume $\tau = \bigoplus_{j=1}^N \tau_{\Omega_j}$. 


Let $W_{L_j}=\{w : |w| = L \text{ and } C_f^w \cap \Omega_j \neq \varnothing \}$. First, assume that $\Omega_j=\bigcup_{w\in W_{L_j}} C_f^w$ for some fixed positive integer $L_j$. Since each generation set is a union of smaller generation sets, we may assume $L_j = L$ for some fixed $L$. Let $M = \sum_{j=1}^{N}|W_{L_j}|$. Thus for any generation set $C_f^w$ with $|w|=L$ 
\begin{align*}(k_\rho(\tau))^s &= \bigg(k_{\rho}\bigg(\bigoplus_{j=1}^N \tau_{\Omega_j}\bigg)\bigg)^s = (k_{\rho}(I_M \tensor \tau_{C_f^w}))^s = M(k_{\rho}(\tau_{C_f^w}))^s.\\
\end{align*}
Given this, it suffices to show
\[(k_\rho(\tau_{C_f^w}))^s=\kappa_s^{(f)}H_f(C_f^w)\]
for any choice of $w$. Observe that
\[k_{\rho}(\tau_{C_f})=k_\rho(\tau_{C_f^w} \tensor I_{2^{nL}})= 2^{nL/s}k_\rho(\tau_{C_f^w})\]
so we have
\[(k_\rho(\tau_{C_f^w}))^s = (2^{-nL/s}k_\rho(\tau_{C_f}))^s = 2^{-nL}\kappa_{s}^{(f)}H_f(C_f)=\kappa_{s}^{(f)}H_f(C_f^w).\]
Thus result is proven in this case.

Next, suppose that $\Omega = \bigcup_{j=1}^N U_j$ for some open sets $\{U_j\}_{j=1}^N$. Note that each $U_j$ can be well approximated by a finite union of some $C_f^w$, and these sets can be chosen so that for every $w$, $|w|=L_j$. Choosing $L=\min\{L_j\}$ and $w$ with $|w|=L$, we have reduced the problem to showing that
\[(k_\rho(\tau_{C_f^{w}}\tensor I_{N2^{nL}}))^s=N2^{nL}\kappa_s^{(f)}H_f(C_f^{w})\]
which is true by the previous argument.

    Finally, let $\Omega = \bigcup_{j=1}^N \Omega_j$ for $\Omega_j$ arbitrary Borel sets. Since for each $\Omega_j$ there is $K_j$ compact and $U_j$ open so that $K_j\subset \Omega_j \subset U_j$ and $H_f(U_j\setminus K_j)<\varepsilon$ for some $\varepsilon>0$. Since $|k_\rho(\tau_{\Omega_j}) - k_\rho(\tau_{U_j})| \leq k_\rho(\tau_{U_j\setminus K_j})\leq \varepsilon$, we can reduce the argument to the previous case and conclude the proof.

\end{proof}

\newpage
\nocite{*}

\bibliographystyle{plain}
\bibliography{perturbationreferences}

\end{document}